\documentclass[12pt,english]{article}
\usepackage[round]{natbib}
\usepackage[landscape]{geometry}
\usepackage{color}
\usepackage{tikz}
\usepackage[latin1]{inputenc}
\usepackage[english]{babel}
\usepackage{graphicx}
\usepackage{amsmath}
\usepackage{amssymb}
\usepackage{amsthm}
\usepackage{latexsym}
\usepackage{multirow}
\newtheorem{thm}{Theorem}

\newtheorem{lem}{Lemma}

\title{NW-SE expansions of non-symmetric Cauchy kernels on  near staircases and  growth diagrams
  }
\author{Olga Azenhas and Aram Emami}

\date{}

\begin{document}
\maketitle
\begin{abstract}
Lascoux has given  a triangular version of the Cauchy identity where Schur polynomials are replaced by Demazure characters and  Demazure atoms. He has then used the staircase expansion to recover    expansions for all Ferrers shapes, where the Demazure characters and Demazure atoms are under the action of Demazure operators specified by the cells above the staircase. The characterisation of the  tableau-pairs in these last expansions  is less explicit. 
  We give here a bijective proof for expansions over near staircases, where the  tableau-pairs are made explicit. Our analysis   formulates  Mason's RSK analogue, for semi-skylines augmented fillings, in terms of  growth diagrams.

\end{abstract}

\hspace{0.8cm}\small{{\bf Keywords:} non-symmetric Cauchy kernel, Demazure character, Demazure operator, RSK analogue,

\hspace{0.9cm} semi-skyline augmented filling, growth diagram.


\section{Introduction}
\label{sec:in}
Let $\lambda$ be a Ferrers shape and $\rho=(n,\dots,1)$   the biggest staircase inside of $\lambda$. Fix a cell in the staircase $(n + 1,n,\dots,1)$ which
does not belong to $\lambda$. The diagonal passing through this cell
cuts the skew diagram  $\lambda/\rho$ into a North-West (NW) and a South-East  (SE) parts whose row and column  cell indices are encoded as reduced words for $\sigma(\lambda,NW)$  and ${\sigma(\lambda,SE)}$ in  $\mathfrak{S}_n$, respectively. 
 \cite{lascouxcrystal} has given the following expansion of the Cauchy kernel $F_{\lambda}(x,y)=\prod_{(i,j)\in \lambda}
(1-x_iy_j)^{-1}$  over the  Ferrers shape $\lambda$,

$\displaystyle\label{ext}{\scriptsize  F_{\lambda}:= F_{\lambda}(x,y)=\prod_{(i,j)\in \lambda}
(1-x_iy_j)^{-1}=\sum_{\nu\in \mathbb{N}^{n}}(\pi
_{\sigma(\lambda,NW)}\widehat{\kappa}_{\nu}(x))(\pi
_{\sigma(\lambda,SE)}\kappa_{\omega \nu}(y))},\quad (1)$

\noindent where   $\pi_{\sigma(\lambda,NW)}$  ( $\pi_{\sigma(\lambda,SE)}$) is the Demazure operator 
 indexed by $\sigma(\lambda,NW)$   (${\sigma(\lambda,SE)}$)
acting on the Demazure atom $\widehat\kappa_{ \nu}(x)$ (key polynomial or Demazure character $\kappa_{\omega \nu}(y)$).
He shows that the staircase $\rho$ expansion,
$F_{\rho}=
\sum_{\nu\in \mathbb{N}^{n}}\widehat{\kappa}_{\nu}(x)\kappa_{\omega \nu}(y)$,
allows to recover all the $F_\lambda$ since divided differences or Demazure operators $\pi_i^x$ and $\pi_j^y$  increase the number of poles in the rational function $(1-x_iy_j)^{-1}$:
$\pi_i^x (1-x_i y_j)^{-1}=(1-x_i y_j)^{-1}(1-x_{i+1} y_j)^{-1}$ and similarly for $ \pi_j^y $.
If $\lambda$ consists of the staircase $\rho$ with the sole  cell  $(r+1,e+1)$, $e=n-r$, above $\rho$,  the blue box in
$\begin{array}{ccccccc}\begin{tikzpicture}[scale=0.25]
\filldraw[color=green!45] (4.07,2.07)rectangle (4.93,2.93);
 \filldraw[color=blue!65] (4.07,3.07)
rectangle (4.93,3.93); \node at (-1.2,3.5){\scriptsize{r+1}}; \node at (4.5,-0.7){\scriptsize{e+1}};
\filldraw[color=red!65] (0.06,0.06) rectangle (5.93,1.93);
\filldraw[color=red!65] (0.06,2.06) rectangle (3.93,3.93);
\filldraw[color=red!65] (0.06,4.06) rectangle (2.93,4.93);
\filldraw[color=red!65] (0.06,5.06) rectangle (1.93,5.93);
 \filldraw[color=red!65] (0.06,6.06) rectangle (0.93,6.93);
  \filldraw[color=red!65] (6.06,0.06) rectangle (6.93,0.93);
\draw[line width=1pt] (0,0) rectangle (7,1);
\draw[line width=1pt] (0,1) rectangle (6,2);\draw[line width=1pt] (0,2) rectangle (5,3);\draw[line width=1pt] (0,3) rectangle (5,4);\draw[line width=1pt] (0,4) rectangle (3,5);\draw[line width=0.95pt] (0,5) rectangle (2,6);\draw[line width=0.95pt] (0,6) rectangle (1,7);
\draw[line width=1pt] (1,0) -- (1,7);\draw[line width=1pt] (2,0) -- (2,6);\draw[line width=1pt] (3,0) -- (3,5);\draw[line width=0.95pt] (4,0) -- (4,4);\draw[line width=1pt] (5,0) --(5,4);\draw[line width=1pt] (6,0) -- (6,2);
\end{tikzpicture}
\begin{tikzpicture}[scale=0.25]
\filldraw[color=red!65] (4.07,2.07)rectangle (4.93,2.93);
\filldraw[color=red!65] (3.07,2.07)rectangle (3.93,2.93);
\filldraw[color=green!45] (3.07,3.07)rectangle (4.93,3.93);
 \filldraw[color=blue!65] (4.07,3.07)rectangle (4.93,3.93); \node at (-1.2,3.5){\scriptsize{r+1}}; \node at (4.5,-0.7){\scriptsize{e+1}};
\filldraw[color=red!65] (0.06,0.06) rectangle (5.93,1.93);
\filldraw[color=red!65] (0.06,2.06) rectangle (2.93,3.93);\filldraw[color=red!65] (0.06,2.06) rectangle (2.93,3.93);
\filldraw[color=red!65] (0.06,4.06) rectangle (2.93,4.93); \filldraw[color=red!65] (0.06,5.06) rectangle (1.93,5.93);
 \filldraw[color=red!65] (0.06,6.06) rectangle (0.93,6.93); \filldraw[color=red!65] (6.06,0.06) rectangle (6.93,0.93);
\draw[line width=1pt] (0,0) rectangle (7,1);
\draw[line width=1pt] (0,1) rectangle (6,2);\draw[line width=1pt] (0,2) rectangle (5,3);\draw[line width=1pt] (0,3) rectangle (5,4);\draw[line width=1pt] (0,4) rectangle (3,5);\draw[line width=0.95pt] (0,5) rectangle (2,6);\draw[line width=0.95pt] (0,6) rectangle (1,7);
\draw[line width=1pt] (1,0) -- (1,7);\draw[line width=1pt] (2,0) -- (2,6);\draw[line width=1pt] (3,0) -- (3,5);\draw[line width=0.95pt] (4,0) -- (4,4);\draw[line width=1pt] (5,0) --(5,4);\draw[line width=1pt] (6,0) -- (6,2);
\end{tikzpicture}
\end{array}$,
 the rows $r$ and $r+1$ of  $\lambda$ have the same length as well as the columns $ e$ and $e+1$.  If $\eta$ and $\eta'$ are obtained by erasing in $\rho$  the green cells $(r,e+1)$ and   $(r+1,e)$ respectively,
 then $F_{\eta}$ is symmetrical in $x_r$ and $x_{r+1}$, and  $F_{\eta'}$ in $y_e$ and $y_{e+1}$.
    Demazure operators $\pi_r^x$ (acting on $x_r$ and $x_{r+1}$) and  $\pi_e^y$ preserve $F_{\eta}$ and $ F_{\eta'} $, 
  and  reproduce  the green cells $(r,e+1)$ and $(r+1,e)$, when acting on $F_{\rho}$,  by creating both the blue cell $(r+1,e+1)$ above $\rho$. That is,
$\scriptsize\pi_r^x F_{\rho}=( \pi_r (1-x_r y_{e+1})^{-1})F_{\eta}=$ 
$F_{\rho}(1-x_{r+1} y_{e+1})^{-1}=F_{\lambda}$ and, similarly, $\pi_{e}^y F_{\rho}=F_{\rho}(1-x_{r+1} y_{e+1})^{-1}=F_{\lambda}$. Henceforth,
$F_{\lambda}=\sum_{\nu\in \mathbb{N}^n}\pi^x_r \widehat{\kappa}_\nu(x) \kappa_{\omega\nu}(y)=\sum_{\nu\in \mathbb{N}^n} \widehat{\kappa}_\nu(x) \pi^y_{e} \kappa_{\omega\nu}(y).$

We give here a combinatorial interpretation of this algebraic explanation for 
\eqref{ext},  when  $\lambda$ is the near staircase  of size $n$:
$\label{nearstair}(\star)~
\begin{tikzpicture} [scale=0.37]
\draw[line width=0.5pt](-0.7,0) -- (-0.7,8);
\draw[line width=0.5pt](-0.7,0) -- (-0.6,0);
\draw[line width=0.5pt](-0.7,8) -- (-0.6,8);
\node at (-1.2,4){\scriptsize n};
\draw[dashed](0,0) rectangle (8,8);
\draw[dashed](0,1.5) -- (8,6.5);
\draw[line width=0.5pt](0,0) rectangle (1,8);
\draw[line width=0.5pt](1,0) rectangle (2,8);
\draw[line width=0.5pt](2,0) rectangle (3,7);
\draw[line width=0.5pt](3,0) rectangle (3,6);
\draw[line width=0.5pt](4,0) rectangle (5,4);
\draw[line width=0.5pt](5,0) rectangle (6,4);
\draw[line width=0.5pt](6,0) rectangle (7,3);
\draw[line width=0.5pt](7,0) rectangle (8,2);
\draw[line width=0.5pt](8,0) rectangle (8,1);
\draw[line width=0.5pt](7,2) -- (7,3);
\draw[line width=0.5pt](8,0) rectangle (8,2);
\draw[line width=0.5pt](8,0) rectangle (8,1);

\draw[line width=0.5pt](3,4) rectangle (4,6);
\draw[line width=0.5pt](2,6) -- (3,6);
\draw[line width=0.5pt](5,3) -- (6,3);

\draw[line width=0.5pt](0,0) -- (8,0);
\draw[line width=0.5pt](0,1) -- (8,1);
\draw[line width=0.5pt](0,2) -- (8,2);
\draw[line width=0.5pt](0,3) -- (6,3);
\draw[line width=0.5pt](0,4) -- (4,4);
\draw[line width=0.5pt](0,5) -- (4,5);
\draw[line width=0.5pt](0,6) -- (2,6);
\draw[line width=0.5pt](0,7) -- (2,7);
\draw[line width=0.5pt](1,7) rectangle (1,8);

\filldraw[color=blue!55] (1.08,7.08)rectangle (1.92,7.92);
\filldraw[color=blue!55] (2.08,6.08)rectangle (2.92,6.92);
\filldraw[color=blue!55] (3.08,5.08)rectangle (3.92,5.92);

\filldraw[color=yellow!95] (5.08,3.099)rectangle (5.92,3.92);
\filldraw[color=yellow!95] (7.08,1.08)rectangle (7.92,1.92);
\filldraw[color=yellow!95] (6.08,2.08)rectangle (6.92,2.92);

\node at (6.1,3.5){\scriptsize $e_{p+1}$};
\node at (6.5,2.5){ \tiny$\ddots$};
\node at (7.5,1.5){\scriptsize $e_k$};
\node at (3.5,5.5){\scriptsize$r_1$};\node at
(2.5,6.5){\tiny$\ddots$};\node at (1.5,7.5){\scriptsize$r_p$};
\draw[line width=0.5pt](1,6) -- (1,8);
\end{tikzpicture}$
\noindent with one layer of $k$ cells, $0\le p\le k<n$,   sited  on the  stairs of $\rho$, 
 avoiding the top and the basement. Given the labels $1\le r_k<\cdots<r_{p+1}<r_1<\cdots<r_p<n$. The  SE labels, $e_{p+1}=n-r_{p+1}<\cdots<e_k=n-r_k$,   NW labels, $r_1<\cdots<r_p$, with respect to the cutting line shown in $(\star)$, indicate  the column indices  $e_j+1$,  $p+1\le j\le k$, and  the row indices  $r_i+1$, $1\le i\le p$, counted in French convention.
In Section \ref{sec:ssaf}, we give the necessary combinatorial tools, being SSAFs the major ones,  the  detectors of keys in \cite{masondemazure}. Section \ref{sec:bruhat} recalls briefly the Bruhat order in $\mathfrak{S}_n$  and  orbits. Sections \ref{sec:rsk} and \ref{sec:crys} are devoted to the  analysis of the behaviour of the \cite{masonrsk}'s RSK analogue under the action of  crystal operators, or coplactic operators, $e_r$, $f_r$ via a growth diagram interpretation: the former delete  convex corners of a Ferrers shape of height or width bigger than one, and thus the later inflate a concave  to a convex corner. Theorems \ref{corcor1}, \ref{bijection} and \ref{bijection-NW-SE}  detect how the key-pair of a  SSYT-pair, of the same shape, change in Bruhat order, when a cell is created in  a concave corner of a Ferrers shape.
Lastly, in Section \ref{sec:last}, the bijective proof of the expansion of $F_\lambda$ over $(\star)$,  as below, is given. As usual, if $m<n$ are in $\mathbb{N}$, put  $[m,n]:=$$\{m,\dots,n\}$, and  $[n]$ for $m=1$. For each $(z,t)\in [0, p]\times [0, k-p]$, let  $(H_z,M_t)\in {[p]\choose z}\times {[p+1,k]\choose t}$, and  $\mathcal{A}_{z,t}^{H_z,M_t}$  the set of  SSAF-pairs with shapes satisfying certain inequalities, in the Bruhat order, as defined in Section \ref{sec:last}. Then

$\label{expansion}\displaystyle{
F_\lambda=\sum_{(F,G)\in \mathcal{A}^{\emptyset,\emptyset}}x^Fy^G
+ \displaystyle\sum_{z=1}^p \sum_{H_z\in{[p]\choose z}}\sum_{\begin{smallmatrix}(F,G)\in \mathcal{A}_{z,0}^{H_z,\emptyset}\end{smallmatrix}}x^Fy^G
+\sum_{t=1}^{k-p}\sum_{M_t \in{[p+1,k]\choose t}}\sum_{\begin{smallmatrix}(F,G)\in \mathcal{A}_{0,t}^{\emptyset, M_t}\end{smallmatrix}}x^Fy^G}$

$+\displaystyle{\sum_{\begin{smallmatrix}(z,t)\in [p]\times [k-p]\end{smallmatrix}}\sum_{(H_z,M_t)}\sum_{\begin{smallmatrix}(F,G)\in \mathcal{A}_{z,t}^{H_z,M_t}\end{smallmatrix}}x^Fy^G
=}$ $\displaystyle{\sum_{\nu\in \mathbb{N}^n}\pi_{r_1}\dots\pi_{r_p}\widehat\kappa_\nu(x)\pi_{e_{p+1}}\dots\pi_{e_k}\kappa_{\omega\nu}(y)} 
$

 $
=\displaystyle{\sum_{\nu\in \mathbb{N}^n}\widehat\kappa_\nu(x)\pi_{n-r_p}\cdots\pi_{n-r_1}\pi_{e_{p+1}}\cdots\pi_{e_k}\kappa_{\omega\nu}(y)=\sum_{\nu\in \mathbb{N}^n}\pi_{n-e_{k}}\cdots\pi_{n-e_{p+1}}\pi_{r_1}\cdots\pi_{r_p}\widehat\kappa_\nu(x)\kappa_{\omega\nu}(y).}$

The two last expansions are the SE, NW versions, with respect to the cutting lines through the top of the first column, and  the end of the botton row  in $(\star)$, respectively. They can be thought as $p=0$ and $p=k$.
\section{SSYTs and  RRSYTs. SSAFs detectors of keys.
}
\label{sec:ssaf}
 A  weak composition $\gamma=(\gamma_1\dots,\gamma_n)$ is a
vector in $\mathbb{N}^n.$
When its entries are in weakly
decreasing order, that is, $\gamma_1 \geq \cdots \geq \gamma_n,$ it is said to be a
 partition.
Every weak composition $\gamma$ determines a unique partition $\lambda$,
 arranging its entries in weakly decreasing
order. It is the unique partition in the orbit of $\gamma$ regarding the usual action of symmetric group $\mathfrak{S}_n$ on $\mathbb{N}^n$. A partition $\lambda=(\lambda_1,\dots, \lambda_n)$ is
identified  with its Young diagram (or Ferrers shape) $dg(\lambda)$ in French
convention, an array of left-justified cells (boxes) with $\lambda_i$ cells
in row $i$ from the bottom, for $1 \leq i \leq n.$  The cells are
located in the diagram  $dg(\lambda)$ by their row and column
indices $(i,j)$, where $1 \leq i \leq n$ and $1 \leq j \leq \lambda_i.$
A filling of shape $ \lambda$ (or a filling of $dg(\lambda)$), in the alphabet $[n]$, is a map $P : dg(\lambda)\rightarrow
 [n].$ A  semi-standard Young tableau (SSYT) $P$ of shape $sh(P)=\lambda$, in the alphabet $[n]$,  is a
filling of $dg(\lambda)$  weakly increasing in each row from
left to right and strictly increasing up in each column.
 The   column word of  the SSYT $P$  is the word consisting
of the entries of each column, read top to bottom and  left
to right. The  content $c(P)=(\alpha_1,\dots,\alpha_n)$
or  weight of $P$ is the content or weight of its column word, that is,
  $\alpha_i$ is the multiplicity of $i\in [n]$ in the column word of $P$.
 A key tableau is a SSYT such that the set of entries
in the $(j + 1)^{th}$ column is a subset of the set of entries in
the $j^{th}$ column, for all $ j$. There is a bijection 
 between weak compositions in $\mathbb{N}^n$ and keys in the alphabet $[n]$ given by $\gamma\rightarrow key(\gamma),$ where
  $key(\gamma)$ is the SSYT such
that for all $j$, the first $\gamma_j$ columns contain the letter
$j$. Any key tableau is of the form $key(\gamma)$ where $\gamma$ is the
content and  the shape is the unique partition in its $\mathfrak{S}_n$-orbit.
A reverse  semi-standard Young tableau (RSSYT), ${R}$, of shape $sh({R})=\lambda$, in the alphabet $[n]$, is a filling of $dg(\lambda)$ such that the entries in each row are weakly decreasing from left to right, and strictly decreasing from bottom to top.
 The reverse Schensted  insertion applied to the word $b=b_1\cdots b_m$, over the alphabet $[n]$, gives a reverse SSYT of the same weight. It  is the same as
 applying the   Schensted  insertion to the  word $b^*=n-b_m+1\cdots n-b_1+1$ to get a SSYT of reverse content, and
then changing   $i$ to $n-i+1$  to obtain  the reverse SSYT $\widetilde P$ with the same weight as $b$. See \cite{fulton}, Appendix A.1, and \cite{stanley}.
Thus  reverse Schensted insertion applied to the column word of a SSYT, defines a weight and shape preserving bijection  between SSYTs and RSSYTs. For instance,
$\label{ex1}\scriptsize\begin{tikzpicture}[scale=0.6]
\draw [line width=1pt] (0,0)rectangle(0.5,0.5);\draw [line width=1pt] (0.5,0)rectangle(1,0.5);
\draw [line width=1pt] (1,0)rectangle(1.5,0.5);\draw [line width=1pt] (1.5,0)rectangle(2,0.5);
\draw [line width=1pt] (0,0.5)rectangle(0.5,1);
\draw [line width=1pt] (0.5,0.5)rectangle(1,1);
\draw [line width=1pt] (0,1)rectangle(0.5,1.5);\node at (1,-0.5){\scriptsize$dg(\lambda)$};
\end{tikzpicture}
~~~~~~~~\begin{tikzpicture}[scale=0.6]
\draw [line width=1pt] (0,0)rectangle(0.5,0.5);\draw [line width=1pt] (0.5,0)rectangle(1,0.5);
\draw [line width=1pt] (1,0)rectangle(1.5,0.5);\draw [line width=1pt] (1.5,0)rectangle(2,0.5);
\draw [line width=1pt] (0,0.5)rectangle(0.5,1);
\draw [line width=1pt] (0.5,0.5)rectangle(1,1);
\draw [line width=1pt] (0,1)rectangle(0.5,1.5);\node at (0.25,0.25){1};\node at (0.75,0.25){2};\node at (1.25,0.25){3};\node at (1.75,0.25){4};\node at (0.25,0.75){2};\node at (0.75,0.75){5};\node at (0.25,1.25){3};\node at (0.75,-0.5){\scriptsize SSYT $P$};
\end{tikzpicture}
~~~~~~~~~~\begin{tikzpicture}[scale=0.6]
\draw [line width=1pt] (0,0)rectangle(0.5,0.5);\draw [line width=1pt] (0.5,0)rectangle(1,0.5);
\draw [line width=1pt] (1,0)rectangle(1.5,0.5);\draw [line width=1pt] (1.5,0)rectangle(2,0.5);
\draw [line width=1pt] (0,0.5)rectangle(0.5,1);
\draw [line width=1pt] (0.5,0.5)rectangle(1,1);
\draw [line width=1pt] (0,1)rectangle(0.5,1.5);\node at (0.25,0.25){5};\node at (0.75,0.25){3};\node at (1.25,0.25){3};\node at (1.75,0.25){2};
\node at (0.25,0.75){4};\node at (0.75,0.75){2};\node at (0.25,1.25){1};\node at (0.75,-0.5){\scriptsize RSSYT ${\scriptsize\widetilde P}$};
\end{tikzpicture}$
are, in this order, the Ferrers diagram of $\lambda=(4,2,1)$, a SSYT  of shape $\lambda$ and  content $(1,2,2,1,1)$, and
 the reverse Schensted insertion  of $P$. 
\subsection{
Weight preserving, shape rearranging bijection between SSYTs and  SSAFs
}
\label{rho}
 A weak composition $\gamma = (\gamma _1,
\dots , \gamma_n)$ is visualised as a diagram consisting of $n$
columns, with $\gamma_j$ cells (boxes) in column $j$, for $1\le j \le n$. The \textit{column
diagram} of $\gamma$ is the set $ dg^{\prime}(\gamma) = \{(i, j)\in
\mathbb{N}^2 : 1 \leq j \leq n, 1 \leq i \leq \gamma_j\}$ where the
coordinates are in French convention, $i$ 
indexing the
rows, and $j$ 
 indexing the columns. (The prime reminds
that the components of $\gamma$ are the column lengths.)
A cell  in a column diagram is written $(i,j),$ where $i$ is the row index
and $j$  the column index. The \textit{augmented diagram} of $\gamma$,
$\widehat{dg}(\gamma)=dg^{\prime}(\gamma)\cup \{(0,j): 1\leq j \leq
n\}$, is the column diagram with $n$ extra cells adjoined in row
$0$. This adjoined row is called the \textit{basement} and it always
contains the numbers $1$ to $n$ in strictly increasing order. The
shape of $\widehat{dg}(\gamma)$ is defined to be $\gamma.$ The empty augmented diagram consists of the basement.
\textit{Semi-skyline augmented fillings} (SSAFs) are  the output of the weight preserving injective map  $\varrho$,   \cite{masondemazure},  acting on RSSYTs as follows.
Let $\widetilde P$ be a RSSYT in the alphabet $[n]$. Define the empty semi-skyline augmented filling as the empty  augmented diagram  with basement elements from $1$ to $n$.
Pick the first column of $\widetilde{P},$ say, $P_1.$
Put all the elements of the first column $P_1$  to the top of the same basement elements in the empty semi-skyline augmented filling. The new diagram is called the semi-skyline augmented filling corresponding to the first column of $\widetilde{P}$ and is denoted by SSAF.
Assume that the first  $i$ columns of $\widetilde{P}$, denoted $P_1, P_2, \dots, P_i$, have been mapped to a SSAF. Consider the largest element, $a_1$, in
the $(i + 1)$-th column $P_{i+1}$. There exists an element greater than or equal to $a_1$ in the i-th row
of the SSAF. Place $a_1$ on top of the
leftmost such element.
Assume that the largest $k-1$ entries in $P_{i+1}$ have been placed into the SSAF. The $k$-th
largest element, $a_k$, of $P_{i+1}$ is then placed into the SSAF. Place $a_k$ on top of the leftmost entry $b$ in row $k-1$ such that $b \ge a_k$ and the cell immediately above $b$ is empty.  Continue this procedure until all entries in
$P_{i+1}$ have been mapped into the $(i + 1)$-th row and then repeat for the remaining columns of $\widetilde{P}$
to obtain the semi-skyline augmented filling $F$.
  The content of the SSAF $F$  is the vector $c(F):=c(\widetilde P)\in \mathbb{N}^n$, and the shape of $F$,   $sh(F)$, is  the weak composition  recording  the length of the columns of $F$, from left to right. Hence  a rearranging of $sh(\tilde P)$.  
Thereby,  the  reverse Schensted insertion composed with $\varrho$
 is a weight preserving and shape rearranging bijection, denoted  $\Psi$, between SSYTs and SSAFs. \cite{masondemazure} proves that if $P$ is a SSYT,
  the right key of $P$, 
a notion  due to   \cite{lascouxkeys}, is 
$key(sh(\Psi(P)))=key(sh(\varrho(\widetilde P))).$  We just say, indistinctly, that $sh(F)$ is the (right)  key of $F$ or $P$.
For instance,
$\label{ex3} \begin{tikzpicture}[scale=0.6]
\draw [line width=1pt] (-4,0)rectangle(-3.5,0.5);\draw [line width=1pt]
(-3.5,0)rectangle(-3,0.5);
\draw [line width=1pt] (-3,0)rectangle(-2.5,0.5);\draw [line width=1pt]
(-2.5,0)rectangle(-2,0.5);
\draw [line width=1pt] (-4,0.5)rectangle(-3.5,1);
\draw [line width=1pt] (-3.5,0.5)rectangle(-3,1);
\draw [line width=1pt] (-4,1)rectangle(-3.5,1.5);\node at
(-3.75,0.25){\scriptsize 1};\node at (-3.25,0.25){\scriptsize 2};\node at (-2.75,0.25){\scriptsize 3};\node at
(-2.25,0.25){\scriptsize 4};\node at (-3.75,0.75){\scriptsize 2};\node at (-3.25,0.75){\scriptsize 5};\node at
(-3.75,1.25){\scriptsize 3};\node at (-5,0){ \scriptsize $P=$};
\end{tikzpicture}$
$\begin{tikzpicture}[scale=0.6]
\draw [line width=1pt] (-5.7,-2)--(-5.3,-2);\node at (-5,-1.5){$$};
\draw [line width=1pt] (-4,-2)rectangle(-3.5,-1.5);\draw [line width=1pt]
(-3.5,-2)rectangle(-3,-1.5);
\draw [line width=1pt] (-3,-2)rectangle(-2.5,-1.5);\draw [line width=1pt]
(-2.5,-2)rectangle(-2,-1.5);
\draw [line width=1pt] (-4,-1.5)rectangle(-3.5,-1);
\draw [line width=1pt] (-3.5,-1.5)rectangle(-3,-1);
\draw [line width=1pt] (-4,-1)rectangle(-3.5,-0.5);\node at
(-3.75,-1.75){\scriptsize 5};\node at (-3.25,-1.75){\scriptsize 3};\node at (-2.75,-1.75){\scriptsize 3};\node at
(-2.25,-1.75){\scriptsize 2};\node at (-3.75,-1.25){\scriptsize 4};\node at (-3.25,-1.25){\scriptsize 2};\node at
(-3.75,-0.75){\scriptsize 1};\node at (-4.75,-2){\scriptsize $\widetilde{P}=$};
\end{tikzpicture}$
$\begin{tikzpicture}[scale=0.6]
\draw [line width=1pt] (-1.5,0)--(-0.5,0);\node at (-1,0.5){$\varrho$};
\draw [line width=1pt] (0,0)--(2.5,0);\node at (0.25,-0.25){\scriptsize 1};\node at
(0.75,-0.25){\scriptsize 2};\node at (1.25,-0.25){\scriptsize 3};\node at (1.75,-0.25){\scriptsize 4};\node at
(2.25,-0.25){\scriptsize 5};
\draw [line width=1pt] (0,0)rectangle(0.5,0.5);\draw [line width=1pt]
(1.5,0)rectangle(2,0.5);\draw [line width=1pt]
(1.5,0.5)rectangle(2,1);\draw [line width=1pt]
(1.5,1)rectangle(2,1.5);\draw [line width=1pt]
(1.5,1.5)rectangle(2,2);\draw [line width=1pt]
(2,0)rectangle(2.5,0.5);\draw [line width=1pt] (2,0.5)rectangle(2.5,1);
\node at (0.25,0.25){\scriptsize 1};\node at (1.75,0.25){\scriptsize 4};\node at
(1.75,0.75){\scriptsize 3};\node at (1.75,1.25){\scriptsize 3};\node at (1.75,1.75){\scriptsize 2};\node at
(2.25,0.25){\scriptsize 5};\node at (2.25,0.75){\scriptsize 2};\node at (4.2,0.1){\scriptsize$=\Psi(P)=F$};
\end{tikzpicture}$,
with $c(F)=(1,2,2,1,1)=c(\widetilde P)=c(P)$,  $sh(F)=(1,0,0,4,2)$ and $key(1,0,0,4,2)$.

\noindent{\bf Remark $1$}.  {\em  If, in  $F$,  the height of column i is $<$ than the height of column $j$,
$j>i$, the choice of  the left most position  implies that $a < b \le  c$  in
any triple $\begin{array}{ccc}
 &b\\
a\;\cdots& c
\end{array}$ with $b$, $c$  in column $j$, and $a$ in column $i$}.

\section{
 RSK analogue,  growth diagram of RRSK and  the key-pair}
\label{sec:rsk}
 The two line array
$w=\scriptsize\left(\begin{array}{cccc}j_1&j_2&\cdots&j_l\\i_1&i_2&\cdots&i_l
\end{array}\right)$, such that $j_{r}< j_{r+1}$, and if $j_r=j_{r+1}$ then $i_{r} \le i_{r+1}$, for all $1\le r\le l-1$,
where $ i_r, ~j_r\in [n]$,
is called a biword in lexicographic order, on the alphabet $[n]$, with respect to the first row.
The RSK algorithm is a bijection between the
biwords in lexicographic order on the alphabet $[n]$, and the pairs of SSYTs of the same shape on the same alphabet.
The reverse RSK (RRSK) algorithm, see \cite{stanley}, is the same as applying RSK  to the biword $\scriptsize{w}^*=\left(\begin{array}{cccc}n-j_l+1&\dots& n-j_1+1\\n-i_l+1&\cdots&n-i_1+1\end{array}\right)$,
 to get a pair $(P,Q)$ of SSYTs, and then change $i$ to $n-i+1$, in all their entries, to obtain a pair $(\widetilde{P},\widetilde{Q})$ of reverse SSYTs.
 \cite{masonrsk} uses an  analogue of Schensted insertion
 to find an analogue
$\Phi$ of the RSK to produce pairs of SSAFs.
 {\em The map $\Phi$ is a bijection between
the   biwords  in lexicographic order in
the alphabet $[n]$, and the  pairs  of SSAFs with shapes (keys)  in some $\mathfrak{S}_n$-orbit.}
The bijection $\Phi$ applied to a biword $w$ is the same as applying the RRSK to $w$ and then applying $\varrho$ to each reverse SSYT of the output pair $(\widetilde P,\widetilde Q)$. That is, $\Phi(w)=(\varrho(\widetilde P),\varrho(\widetilde Q))$. Equivalently,  applying the RSK to $w$ and then $\Psi$ to each  SSYT of the output pair $( P, Q)$ gives  $\Phi(w)=(\Psi(P),\Psi(Q))$.

The remaining  of this section  follows  closely  \cite{kratt} and \cite{stanley}.
Let $w$ be a biword in the lexicographic order, over the alphabet $[n]$, represented  in the $n\times n$ square diagram, by putting the number  $r$  in the cell $(i,j)$  of the  square grid, whenever the biletter $\binom{j}{i}$ 
  appears $r\ge 1$ times in the biword $w$. (Rows are counted from bottom to top and  columns from left to right.)
 Scanning the columns, from left to right  and bottom to top, the biword $w$ is recovered in lexicographic order.
The 01-filling of this diagram 
 has at most one $1$ in each row and each column as follows.
Construct a rectangle diagram with more rows and columns as follows.
The entries which are originally in the same column or in the same row are put in
different columns and rows in the larger diagram. An entry $m$ is  replaced by
$m$ $1$'s in the new diagram, all of them placed in different rows and columns. The entries in a row are separated from bottom/left to top/right, and  the 1's are represented by ${\bf X}$'s.
    If there should be several entries in a column as well,  separate entries in a column from bottom/left to top/right. In the cell with entry $m$, we replace $m$ by a chain of $m$ ${\bf X}$'s arranged from bottom/left to top/right.
 The original $n$ columns and $n$ rows are indicated by thick lines,
whereas the newly created columns and rows are indicated by thin lines.
To give an interpretation of RRSK in terms of growth diagrams, we start by
assigning the empty partition $\emptyset$ to each point of a corner cell on the right column and on the top row of the $01$-filling. Then
assign  partitions to the other corners inductively by applying the following  local rules.
Consider the cell
$\begin{tikzpicture}[scale=0.2]
\draw[line width=0.5pt] (0,0) rectangle
(2,2);
\node at (2.3,2.3){$\varepsilon$};\node at (2.3,-0.3){$\mu$};\node at (-0.3,2.2){$\nu$};
\end{tikzpicture}$
labeled by the partitions  $\varepsilon, \mu, \nu$, such that $\varepsilon \subseteq \mu$ and $\varepsilon \subseteq \nu$, where  the containment means that the Ferrres shapes differ at most by one box. Then $\beta$ is determined as follows:
(i) If $\varepsilon=\mu=\nu,$ and if there is no cross in the cell, then
$\beta=\varepsilon $. (ii)  If $\varepsilon=\mu\neq\nu,$ then $\beta=\nu.$
 If $\varepsilon=\nu\neq\mu,$ then $\beta=\mu.$
(iii)  If $\varepsilon, \mu, \nu$ are pairwise different, then
$\beta=\mu\cup\nu$, \textit{i.e}, $\beta_i=max\{\mu_i,\nu_i\}$.
(iv) If $\varepsilon\neq\mu=\nu,$ then $\beta$ is formed by adding a
box to the $(k +1)$-st row of $\mu=\nu,$ given that $\mu=\nu$ and
$\varepsilon$ differ in the $k$-th row.
(v)  If $\varepsilon=\mu=\nu,$ and if there is a cross in the cell, then
$\beta$ is formed by adding a box to the first row of
$\varepsilon=\mu=\nu.$

Applying the local rules leads to a pair of nested sequences of partitions on the left column and in the bottom row of the growth diagram.
 Let $\beta^i$ be the partition assigned to the $i$-th  thick column,  on the bottom row of the growth diagram, when we scan the thick columns from  right  to left, with the rightmost column being column $n$.
 Then the bottom row labelling, assigned to the thick columns of the
 growth diagram, produces a sequence of partitions $ \underline\beta^0 \supseteq\cdots \supseteq  \underline\beta^{n-1}\supseteq \underline\beta^n=\emptyset$,
 such that  $\underline\beta^{i-1}/\underline\beta^{i}$ is a horizontal strip.
Let $\beta^i$ be the partition assigned to the $i$-th  thick row, on the left of the growth diagram, when we scan the thick rows  from  top to bottom, with the top row being row $n$.
 Then the left column labelling, assigned to the thick rows of the
 growth diagram, produces a sequence of partitions $\emptyset = \beta^n\subseteq\beta^{n-1}\subseteq\cdots \subseteq\beta^0$,
such that
 $\beta^{i-1}/\beta^{i}$ is a horizontal strip.
 Filling in, with  $i$,   the cells of $\beta^{i-1}/\beta^{i}$ and  $\underline\beta^{i-1}/\underline\beta^{i}$, for $i\ge 1$, produces, in this order, the pair $(\widetilde{P},\widetilde{Q})$ of RSSYTs of the same shape.
 This is the same as applying the reverse RSK to the biword $w$.
For instance, if $w=\scriptsize\left(\begin{array}{ccccccccc}1&1&2&3&4&4&5&7&7\\2&7&2&4&1&3&3&1&1\end{array}\right)$,
$\scriptsize\begin{array}{ccccccc}\label{fig:biword}\begin{tikzpicture}[scale=0.38] \draw[line width=1pt] (0,0) --
(7,0);\draw[line width=1pt] (0,0) -- (0,7); \draw[line width=1pt]
(0,1) -- (7,1);\draw[line width=1pt] (0,2) -- (7,2);\draw[line
width=1pt] (0,3)--  (7,3);\draw[line width=1pt] (0,4) --
(7,4);\draw[line width=0.95pt] (0,5) -- (7,5);\draw[line width=1pt]
(0,6) -- (7,6);\draw[line width=1pt] (0,7) -- (7,7); \draw[line
width=1pt] (1,0) -- (1,7);\draw[line width=1pt] (2,0) --
(2,7);\draw[line width=0.95pt] (3,0) -- (3,7);\draw[line width=1pt]
(4,0) -- (4,7);\draw[line width=1pt] (5,0) --(5,7);\draw[line
width=1pt] (6,0) -- (6,7);\draw[line width=1pt] (7,0) -- (7,7);
\node at (0.5,1.5){1};\node at (0.5,6.5){1};\node at
(1.5,1.5){1};\node at (2.5,3.5){1};\node at (3.5,0.5){1};\node at
(3.5,2.5){1};\node at (4.5,2.5){1};\node at (6.5,0.5){2};
\end{tikzpicture}&
\label{fig:01}\scriptsize\begin{tikzpicture}[scale=0.38]
\draw[line width=1.5pt] (0,0) -- (0,11);\draw[line width=0.25pt] (1,0)
-- (1,11);\draw[line width=1.5pt] (2,0) -- (2,11);\draw[line
width=1.5pt] (3,0) -- (3,11);\draw[line width=1.5pt] (4,0) --
(4,11);\draw[line width=0.25pt] (5,0) -- (5,11);\draw[line width=1.5pt]
(6,0) -- (6,11);\draw[line width=1.5pt] (7,0) -- (7,11);\draw[line
width=1.5pt] (8,0) -- (8,11);\draw[line width=0.25pt] (9,0) --
(9,11);\draw[line width=1.5pt] (10,0) -- (10,11);\draw[line width=1.5pt] (0,0) --
(10,0);\draw[line width=0.25pt] (0,1) -- (10,1);\draw[line
width=0.25pt] (0,2) -- (10,2);\draw[line width=1.5pt] (0,3) --
(10,3);\draw[line width=0.25pt] (0,4) -- (10,4); \draw[line
width=1.5pt] (0,5) -- (10,5);\draw[line width=0.25pt] (0,6) --
(10,6);\draw[line width=1.5pt] (0,7) -- (10,7);\draw[line width=1.5pt]
(0,8) -- (10,8); \draw[line width=1.5pt] (0,9) -- (10,9);\draw[line
width=1.5pt] (0,10) -- (10,10);\draw[line width=1.5pt] (0,11) --
(10,11);\node at (0.5,3.5){{\bf X}};\node at (
1.5,10.5){{\bf X}};\node at (2.5,4.5){{\bf X}};\node at
(3.5,7.5){{\bf X}};\node at (4.5,0.5){{\bf X}};\node at
(5.5,5.5){{\bf X}};\node at (6.5,6.5){{\bf X}};\node at
(8.5,1.5){{\bf X}};\node at (9.5,2.5){{\bf X}};
\end{tikzpicture}
&\scriptsize\begin{tikzpicture}[scale=0.38] \draw[line width=1.5pt] (0,0) --
(0,11);\draw[line width=0.25pt] (1,0) -- (1,11);\draw[line width=1.5pt]
(2,0) -- (2,11);\draw[line width=1.5pt] (3,0) -- (3,11);\draw[line
width=1.5pt] (4,0) -- (4,11);\draw[line width=0.25pt] (5,0) --
(5,11);\draw[line width=1.5pt] (6,0) -- (6,11);\draw[line width=1.5pt]
(7,0) -- (7,11);\draw[line width=1.5pt] (8,0) -- (8,11);\draw[line
width=0.25pt] (9,0) -- (9,11);\draw[line width=1.5pt] (10,0) --
(10,11);\draw[line
width=1.5pt] (0,0) -- (10,0);\draw[line width=0.25pt] (0,1) --
(10,1);\draw[line width=0.25pt] (0,2) -- (10,2);\draw[line width=1.5pt]
(0,3) -- (10,3);\draw[line width=0.25pt] (0,4) -- (10,4); \draw[line
width=1.5pt] (0,5) -- (10,5);\draw[line width=0.25pt] (0,6) --
(10,6);\draw[line width=1.5pt] (0,7) -- (10,7);\draw[line width=1.5pt]
(0,8) -- (10,8); \draw[line width=1.5pt] (0,9) -- (10,9);\draw[line
width=1.5pt] (0,10) -- (10,10);\draw[line width=1.5pt] (0,11) --
(10,11);\node at (0.5,3.5){{\bf X}};\node at (
1.5,10.5){{\bf X}};\node at (2.5,4.5){{\bf X}};\node at
(3.5,7.5){{\bf X}};\node at (4.5,0.5){{\bf X}};\node at
(5.5,5.5){{\bf X}};\node at (6.5,6.5){{\bf X}};\node at
(8.5,1.5){{\bf X}};\node at (9.5,2.5){{\bf X}}; \node at
(-0.7,10){\scriptsize 1};\node at (-0.7,9){\scriptsize 1};\node at (-0.7,8){\scriptsize1};\node at
(-0.8,7){\scriptsize 11};\node at (-1,6){\scriptsize 111};\node at (-1,5){\scriptsize 211};\node at
(-1,4){\scriptsize 311};\node at (-1,3){\scriptsize 411};\node at
(-1.2,2){\scriptsize 4111};\node at (-1.2,1){\scriptsize 4211};\node at (-1.2,0){\scriptsize 4311};
\node at (9,-0.7)  [ rotate=90] {\scriptsize 1};\node at (8,-0.7) [
rotate=90]{\scriptsize 2};\node at (7,-0.7) [ rotate=90]{\scriptsize 2};\node at (6,-0.9) [
rotate=90]{\scriptsize 21};\node at (5,-0.9) [ rotate=90]{\scriptsize 22};\node at (4,-0.9) [
rotate=90]{\scriptsize 32};\node at (3,-1.1) [ rotate=90]{\scriptsize 321};\node at
(2,-1.1) [ rotate=90]{\scriptsize 331};\node at (1,-1.3) [
rotate=90]{\scriptsize 3311};\node at (0,-1.3) [ rotate=90]{\scriptsize 4311}; \node at
(-0.7,11) {\scriptsize $\emptyset$};\node at (10,-0.7) [
rotate=90]{\scriptsize $\emptyset$};
 \node at (15,7){7};\node at (16,7){3};
\node at (17,7){2};\node at (18,7){2};
\node at (15,8){4};\node at(16,8){1};\node at (17,8){1};
\node at (15,9){3};\node at (15,10){1};
\node at (13,8){$\widetilde{P}=$};
 \node at (15,1){7};\node at
(16,1){7};\node at (17,1){4};\node at (18,1){1};\node at
(15,2){5};\node at (16,2){4};\node at (17,2){2};\node at
(15,3){3};\node at (15,4){1};\node at (13,2){$\widetilde{Q}=$};
\end{tikzpicture}
\end{array}$\\

\noindent with $n=7$,  one has the $7\times 7$ square diagram on the left, the 01-filling in the middle, and the RRSK growth diagram on the right where $(\widetilde P,\widetilde Q)$ is the image of $w$,
The map $\varrho$, defined in Section \ref{rho}, allows to find the pair of SSAFs from the growth diagram   of the reverse RSK.
  Consider   the growth diagram bottom labelling, $\underline \beta^0 \supseteq\cdots \supseteq\underline \beta^{n-1 }\supseteq \underline\beta^n=\emptyset$,  assigned to the thick columns.
 For each $i= 1,\dots,n$, let 
 $\underline\beta^{i_{l_i-1}}\supseteq\cdots\supseteq\underline\beta^{i_1}$, with $\underline\beta^{i_{1}-1}:=\underline\beta^i$, be the bottom sequence of partitions labelling
the  $l_i-1$ thin columns,  strictly in between  the two thick columns  $i-1$ and $i$. 
 \textit{  Start with the empty partition $\underline\beta^n=\emptyset$ 
 and   the empty SSAF with basement $[n]$. Proceed to the left along the growth diagram bottom row labelling.
When we arrive to the partition $\beta^{i_j}$, we put a cell, filled with  $i$, in the leftmost possible  place of the   SSAF such that the shape of the new SSAF is a rearrangement of the partition $\beta^{i_j}$ and the decreasing property on the columns of the SSAF, from the bottom to the top, is preserved. At the end of the scanning of the bottom row labelling, the SSAF $G$ is obtained}. Its shape is a rearrangement of the shape of $\tilde Q$.
 Similarly,  consider   the left side labelling, $\emptyset = \beta^n \subseteq \beta^{n-1} \subseteq \dots \subseteq \beta^0$,  assigned to the thick rows of the
 growth diagram.
 For each $i= 1,\dots,n$, let
 $\beta^{i_1}\subseteq\cdots\subseteq\beta^{i_{l_i-1}}$, with $\beta^{i_{1}-1}:=\beta^i$, be the sequence of partitions labelling 
the  $l_i-1$ thin rows,  strictly in between  the two thick rows  $i-1$ and $i$. At the end of the procedure, when the scanning of the left side labelling is finished, the SSAF $F$ is obtained. Its shape is a rearrangement of the shape of $\tilde P$. See the example below corresponding to the RRSK above. 

$\scriptsize\begin{tikzpicture}[scale=0.32]
\draw[line width=1pt] (0,0.5) --(7,0.5); \draw[line width=1pt] (8,0.5) --(15,0.5); \draw[line width=1pt] (16,0.5) --(23,0.5);
\draw[line width=1pt] (14,0.5) rectangle (15,1.5); \node at (14.5,1){7};\draw[line width=1pt] (22,0.5) rectangle(23,1.5);\node at (22.5,1){7};\draw[line width=1pt] (22,1.5) rectangle(23,2.5);\node at (22.5,2){7};
\node at (0.5,0){1};\node at (1.5,0){2};\node at (2.5,0){3};\node at (3.5,0){4};\node at (4.5,0){5};\node at (5.5,0){6};\node at (6.5,0){7};\node at (8.5,0){1};\node at (9.5,0){2};\node at (10.5,0){3};\node at (11.5,0){4};\node at (12.5,0){5};\node at (13.5,0){6};\node at (14.5,0){7};\node at (16.5,0){1};\node at (17.5,0){2};\node at (18.5,0){3};\node at (19.5,0){4};\node at (20.5,0){5};\node at (21.5,0){6};\node at (22.5,0){7};

\node at (3.5,-1){};\node at (3.5,-2){\scriptsize$\underline\beta^{0}=\emptyset$};
\node at (11.5,-1){\scriptsize$\underline\beta^{1_1}=1$};\node at (11.5,-2){\scriptsize$\underline\beta^{1}=2$};
\node at (19.5,-1){\scriptsize$\underline\beta^{1_2}=2$};\node at (19.5,-2){$\underline\beta^{1}=2$};
\end{tikzpicture}$$~~~~\scriptsize\begin{tikzpicture}[scale=0.33]
\draw[line width=1pt] (0,0.5) --(7,0.5);
\draw[line width=1pt] (6,0.5) rectangle (7,1.5);\node at (6.5,1){7};\draw[line width=1pt] (6,1.5) rectangle (7,2.5);\node at (6.5,2){7};

\node at (0.5,0){1};\node at (1.5,0){2};\node at (2.5,0){3};\node at (3.5,0){4};\node at (4.5,0){5};\node at (5.5,0){6};\node at (6.5,0){7};

\node at (3.5,-1){\scriptsize$\underline\beta^{2_1}=2$};\node at (3.5,-2){\scriptsize$\underline\beta^{2}=2$};
\end{tikzpicture}$
$~~~~\scriptsize\begin{tikzpicture}[scale=0.32]
 \draw[line width=1pt] (8,0.5) --(15,0.5); \draw[line width=1pt] (16,0.5) --(23,0.5);

\draw[line width=1pt] (12,0.5) rectangle (13,1.5); \node at (12.5,1){5};
\draw[line width=1pt] (14,0.5) rectangle (15,1.5); \node at (14.5,1){7};
\draw[line width=1pt] (14,1.5) rectangle (15,2.5); \node at (14.5,2){7};

\draw[line width=1pt] (20,0.5) rectangle(21,1.5);\node at (20.5,1){5};
\draw[line width=1pt] (20,1.5) rectangle(21,2.5);\node at (20.5,2){4};
\draw[line width=1pt] (22,0.5) rectangle(23,1.5);\node at (22.5,1){7};
\draw[line width=1pt] (22,1.5) rectangle(23,2.5);\node at (22.5,2){7};
\node at (8.5,0){1};\node at (9.5,0){2};\node at (10.5,0){3};\node at (11.5,0){4};\node at (12.5,0){5};\node at (13.5,0){6};\node at (14.5,0){7};\node at (16.5,0){1};\node at (17.5,0){2};\node at (18.5,0){3};\node at (19.5,0){4};\node at (20.5,0){5};\node at (21.5,0){6};\node at (22.5,0){7};

\node at (11.5,-1){\scriptsize$\underline\beta^{3_1}=21$};\node at (11.5,-2){\scriptsize$\underline\beta^{3}=21$};
\node at (19.5,-1){\scriptsize$\underline\beta^{4_1}=22$};\node at (19.5,-2){\scriptsize$\underline\beta^{4}=32$};
\end{tikzpicture}$
$\scriptsize\begin{tikzpicture}[scale=0.33]
\draw[line width=1pt] (0,0.5) --(7,0.5);

\draw[line width=1pt] (4,0.5) rectangle (5,1.5);\node at (4.5,1){5};\draw[line width=1pt] (4,1.5) rectangle (5,2.5);\node at (4.5,2){4};\draw[line width=1pt] (4,2.5) rectangle (5,3.5);\node at (4.5,3){4};
\draw[line width=1pt] (6,0.5) rectangle (7,1.5);\node at (6.5,1){7};\draw[line width=1pt] (6,1.5) rectangle (7,2.5);\node at (6.5,2){7};

\node at (0.5,0){1};\node at (1.5,0){2};\node at (2.5,0){3};\node at (3.5,0){4};\node at (4.5,0){5};\node at (5.5,0){6};\node at (6.5,0){7};
\node at (3.5,-1){\scriptsize$\underline\beta^{4_2}=32$};\node at (3.5,-2){\scriptsize$\underline\beta^{4}=32$};

\end{tikzpicture}$

\vskip0.8cm

$\scriptsize\begin{tikzpicture}[scale=0.33]
\draw[line width=1pt] (8,0.5) --(15,0.5);
\draw[line width=1pt] (10,0.5) rectangle (11,1.5); \node at (10.5,1){3};
\draw[line width=1pt] (12,0.5) rectangle (13,1.5); \node at (12.5,1){5};
\draw[line width=1pt] (12,1.5) rectangle (13,2.5); \node at (12.5,2){4};
\draw[line width=1pt] (12,2.5) rectangle (13,3.5); \node at (12.5,3){4};
\draw[line width=1pt] (14,0.5) rectangle (15,1.5); \node at (14.5,1){7};
\draw[line width=1pt] (14,1.5) rectangle (15,2.5); \node at (14.5,2){7};

\node at (8.5,0){1};\node at (9.5,0){2};\node at (10.5,0){3};\node at (11.5,0){4};\node at (12.5,0){5};\node at (13.5,0){6};\node at (14.5,0){7};
\node at (11.5,-1){\scriptsize$\underline\beta^{5_1}=321$};\node at (11.5,-2){\scriptsize$\underline\beta^{5}=321$};

\end{tikzpicture}$
$~~\scriptsize\begin{tikzpicture}[scale=0.32]
  \draw[line width=1pt] (16,0.5) --(23,0.5);
\draw[line width=1pt] (18,0.5) rectangle(19,1.5);\node at (18.5,1){3};
\draw[line width=1pt] (20,0.5) rectangle(21,1.5);\node at (20.5,1){5};
\draw[line width=1pt] (20,1.5) rectangle(21,2.5);\node at (20.5,2){4};
\draw[line width=1pt] (20,2.5) rectangle(21,3.5);\node at (20.5,3){4};
\draw[line width=1pt] (22,0.5) rectangle(23,1.5);\node at (22.5,1){7};
\draw[line width=1pt] (22,1.5) rectangle(23,2.5);\node at (22.5,2){7};
\draw[line width=1pt] (22,2.5) rectangle(23,3.5);\node at (22.5,3){2};
\node at (16.5,0){1};\node at (17.5,0){2};\node at (18.5,0){3};\node at (19.5,0){4};\node at (20.5,0){5};\node at (21.5,0){6};\node at (22.5,0){7};

\node at (19.5,-1){\scriptsize$\underline\beta^{6_1}=331$};\node at (19.5,-2){\scriptsize$\underline\beta^{6}=331$};
\end{tikzpicture}$$~~~\scriptsize\begin{tikzpicture}[scale=0.33]
\draw[line width=1pt] (0,0.5) --(7,0.5); \draw[line width=1pt] (8,0.5) --(15,0.5);
\draw[line width=1pt] (0,0.5) rectangle (1,1.5);\node at (0.5,1){1};
\draw[line width=1pt] (2,0.5) rectangle (3,1.5);\node at (2.5,1){3};
\draw[line width=1pt] (4,0.5) rectangle (5,1.5);\node at (4.5,1){5};\draw[line width=1pt] (4,1.5) rectangle (5,2.5);\node at (4.5,2){4};\draw[line width=1pt] (4,2.5) rectangle (5,3.5);\node at (4.5,3){4};
\draw[line width=1pt] (6,0.5) rectangle (7,1.5);\node at (6.5,1){7};\draw[line width=1pt] (6,1.5) rectangle (7,2.5);\node at (6.5,2){7};\draw[line width=1pt] (6,2.5) rectangle (7,3.5);\node at (6.5,3){2};

\draw[line width=1pt] (8,0.5) rectangle (9,1.5); \node at (8.5,1){1};
\draw[line width=1pt] (10,0.5) rectangle (11,1.5); \node at (10.5,1){3};
\draw[line width=1pt] (12,0.5) rectangle (13,1.5); \node at (12.5,1){5};
\draw[line width=1pt] (12,1.5) rectangle (13,2.5); \node at (12.5,2){4};
\draw[line width=1pt] (12,2.5) rectangle (13,3.5); \node at (12.5,3){4};
\draw[line width=1pt] (12,3.5) rectangle (13,4.5); \node at (12.5,4){1};
\draw[line width=1pt] (14,0.5) rectangle (15,1.5); \node at (14.5,1){7};
\draw[line width=1pt] (14,1.5) rectangle (15,2.5); \node at (14.5,2){7};
\draw[line width=1pt] (14,2.5) rectangle (15,3.5); \node at (14.5,3){2};

\node at (0.5,0){1};\node at (1.5,0){2};\node at (2.5,0){3};\node at (3.5,0){4};\node at (4.5,0){5};\node at (5.5,0){6};\node at (6.5,0){7};\node at (8.5,0){1};\node at (9.5,0){2};\node at (10.5,0){3};\node at (11.5,0){4};\node at (12.5,0){5};\node at (13.5,0){6};\node at (14.5,0){7}; \node at (16.4,0.4){=\,G};

\node at (3.5,-1){\scriptsize$\underline\beta^{7_1}=3311$};\node at (3.5,-2){\scriptsize$\underline\beta^{7}=4311$};
\node at (11.5,-1){\scriptsize$\underline\beta^{7_2}=4311$};\node at (11.5,-2){\scriptsize$\underline\beta^{7}=4311$};
\end{tikzpicture}$

\vskip0.8cm
$\scriptsize\begin{tikzpicture}[scale=0.32]
\draw[line width=1pt] (0,0.5) --(7,0.5); \draw[line width=1pt] (8,0.5) --(15,0.5); \draw[line width=1pt] (16,0.5) --(23,0.5);
\draw[line width=1pt] (14,0.5) rectangle (15,1.5); \node at (14.5,1){7};\draw[line width=1pt] (22,0.5) rectangle(23,1.5);\node at (22.5,1){7};
\node at (0.5,0){1};\node at (1.5,0){2};\node at (2.5,0){3};\node at (3.5,0){4};\node at (4.5,0){5};\node at (5.5,0){6};\node at (6.5,0){7};\node at (8.5,0){1};\node at (9.5,0){2};\node at (10.5,0){3};\node at (11.5,0){4};\node at (12.5,0){5};\node at (13.5,0){6};\node at (14.5,0){7};\node at (16.5,0){1};\node at (17.5,0){2};\node at (18.5,0){3};\node at (19.5,0){4};\node at (20.5,0){5};\node at (21.5,0){6};\node at (22.5,0){7};

\node at (3.5,-1){};\node at (3.5,-2){$\beta^{0}=\emptyset$};
\node at (11.5,-1){$\beta^{1_1}=1$};\node at (11.5,-2){$\beta^{1}=1$};
\node at (19.5,-1){$\beta^{2_1}=1$};\node at (19.5,-2){$\beta^{2}=1$};
\end{tikzpicture}$$~~~\scriptsize\begin{tikzpicture}[scale=0.33]
\draw[line width=1pt] (0,0.5) --(7,0.5);
\draw[line width=1pt] (6,0.5) rectangle (7,1.5);\node at (6.5,1){7};
\node at (0.5,0){1};\node at (1.5,0){2};\node at (2.5,0){3};\node at (3.5,0){4};\node at (4.5,0){5};\node at (5.5,0){6};\node at (6.5,0){7};
\node at (3.5,-1){\scriptsize$\beta^{3_1}=1$};\node at (3.5,-2){\scriptsize$\beta^{3}=1$};

\end{tikzpicture}$
$~~~~\scriptsize\begin{tikzpicture}[scale=0.32]
 \draw[line width=1pt] (8,0.5) --(15,0.5); \draw[line width=1pt] (16,0.5) --(23,0.5);


\draw[line width=1pt] (11,0.5) rectangle (12,1.5); \node at (11.5,1){4};
\draw[line width=1pt] (14,0.5) rectangle (15,1.5); \node at (14.5,1){7};

\draw[line width=1pt] (19,0.5) rectangle(20,1.5);\node at (19.5,1){4};
\draw[line width=1pt] (18,0.5) rectangle(19,1.5);\node at (18.5,1){3};
\draw[line width=1pt] (22,0.5) rectangle(23,1.5);\node at (22.5,1){7};
\node at (8.5,0){1};\node at (9.5,0){2};\node at (10.5,0){3};\node at (11.5,0){4};\node at (12.5,0){5};\node at (13.5,0){6};\node at (14.5,0){7};\node at (16.5,0){1};\node at (17.5,0){2};\node at (18.5,0){3};\node at (19.5,0){4};\node at (20.5,0){5};\node at (21.5,0){6};\node at (22.5,0){7};

\node at (11.5,-1){\scriptsize$\beta^{4_1}=11$};\node at (11.5,-2){\scriptsize$\beta^{4}=11$};
\node at (19.5,-1){\scriptsize$\beta^{5_1}=111$};\node at (19.5,-2){\scriptsize$\beta^{5}=211$};
\end{tikzpicture}$

$\scriptsize\begin{tikzpicture}[scale=0.33]
\draw[line width=1pt] (0,0.5) --(7,0.5); \draw[line width=1pt] (8,0.5) --(15,0.5);
\draw[line width=1pt] (2,0.5) rectangle (3,1.5);\node at (2.5,1){3};
\draw[line width=1pt] (2,1.5) rectangle (3,2.5);\node at (2.5,2){3};
\draw[line width=1pt] (3,0.5) rectangle (4,1.5);\node at (3.5,1){4};
\draw[line width=1pt] (6,0.5) rectangle (7,1.5);\node at (6.5,1){7};
\draw[line width=1pt] (10,0.5) rectangle (11,1.5); \node at (10.5,1){3};
\draw[line width=1pt] (10,2.5) rectangle (11,3.5); \node at (10.5,3){2};
\draw[line width=1pt] (10,1.5) rectangle (11,2.5); \node at (10.5,2){3};
\draw[line width=1pt] (11,0.5) rectangle (12,1.5); \node at (11.5,1){4};
\draw[line width=1pt] (14,0.5) rectangle (15,1.5); \node at (14.5,1){7};
\node at (0.5,0){1};\node at (1.5,0){2};\node at (2.5,0){3};\node at (3.5,0){4};\node at (4.5,0){5};\node at (5.5,0){6};\node at (6.5,0){7};
\node at (8.5,0){1};\node at (9.5,0){2};\node at (10.5,0){3};\node at (11.5,0){4};\node at (12.5,0){5};\node at (13.5,0){6};\node at (14.5,0){7};
\node at (3.5,-1){\scriptsize$\beta^{5_2}=211$};\node at (3.5,-2){\scriptsize$\beta^{5}=211$};
\node at (11.5,-1){\scriptsize$\beta^{6_1}=311$};\node at (11.5,-2){\scriptsize$\beta^{6}=411$};
\end{tikzpicture}$
$~~~~\scriptsize\begin{tikzpicture}[scale=0.32]
\draw[line width=1pt] (16,0.5) --(23,0.5);



\draw[line width=1pt] (18,3.5) rectangle(19,4.5);\node at (18.5,4){2};
\draw[line width=1pt] (18,2.5) rectangle(19,3.5);\node at (18.5,3){2};
\draw[line width=1pt] (18,1.5) rectangle(19,2.5);\node at (18.5,2){3};
\draw[line width=1pt] (18,0.5) rectangle(19,1.5);\node at (18.5,1){3};
\draw[line width=1pt] (19,0.5) rectangle(20,1.5);\node at (19.5,1){4};
\draw[line width=1pt] (22,0.5) rectangle(23,1.5);\node at (22.5,1){7};
\node at (16.5,0){1};\node at (17.5,0){2};\node at (18.5,0){3};\node at (19.5,0){4};\node at (20.5,0){5};\node at (21.5,0){6};\node at (22.5,0){7};

\node at (19.5,-1){\scriptsize$\beta^{6_2}=411$};\node at (19.5,-2){\scriptsize$\beta^{6}=411$};
\end{tikzpicture}$$~~~\scriptsize\begin{tikzpicture}[scale=0.33]
\draw[line width=1pt] (0,0.5) --(7,0.5); \draw[line width=1pt] (8,0.5) --(15,0.5);
 \draw[line width=1pt] (16,0.5) --(23,0.5);
\draw[line width=1pt] (0,0.5) rectangle (1,1.5);\node at (0.5,1){1};
\draw[line width=1pt] (2,3.5) rectangle (3,4.5);\node at (2.5,4){2};
\draw[line width=1pt] (2,2.5) rectangle (3,3.5);\node at (2.5,3){2};
\draw[line width=1pt] (2,0.5) rectangle (3,1.5);\node at (2.5,1){3};
\draw[line width=1pt] (2,1.5) rectangle (3,2.5);\node at (2.5,2){3};
\draw[line width=1pt] (3,0.5) rectangle (4,1.5);\node at (3.5,1){4};
\draw[line width=1pt] (6,0.5) rectangle (7,1.5);\node at (6.5,1){7};

\draw[line width=1pt] (8,1.5) rectangle (9,2.5); \node at (8.5,2){1};
\draw[line width=1pt] (8,0.5) rectangle (9,1.5); \node at (8.5,1){1};
\draw[line width=1pt] (10,3.5) rectangle (11,4.5); \node at (10.5,4){2};
\draw[line width=1pt] (10,0.5) rectangle (11,1.5); \node at (10.5,1){3};
\draw[line width=1pt] (10,2.5) rectangle (11,3.5); \node at (10.5,3){2};
\draw[line width=1pt] (10,1.5) rectangle (11,2.5); \node at (10.5,2){3};
\draw[line width=1pt] (11,0.5) rectangle (12,1.5); \node at (11.5,1){4};
\draw[line width=1pt] (14,0.5) rectangle (15,1.5); \node at (14.5,1){7};

\draw[line width=1pt] (16,2.5) rectangle(17,3.5);\node at (16.5,3){1};
\draw[line width=1pt] (16,1.5) rectangle(17,2.5);\node at (16.5,2){1};
\draw[line width=1pt] (16,0.5) rectangle(17,1.5);\node at (16.5,1){1};
\draw[line width=1pt] (18,3.5) rectangle(19,4.5);\node at (18.5,4){2};
\draw[line width=1pt] (18,2.5) rectangle(19,3.5);\node at (18.5,3){2};
\draw[line width=1pt] (18,1.5) rectangle(19,2.5);\node at (18.5,2){3};
\draw[line width=1pt] (18,0.5) rectangle(19,1.5);\node at (18.5,1){3};
\draw[line width=1pt] (19,0.5) rectangle(20,1.5);\node at (19.5,1){4};
\draw[line width=1pt] (22,0.5) rectangle(23,1.5);\node at (22.5,1){7};
\node at (0.5,0){1};\node at (1.5,0){2};\node at (2.5,0){3};\node at (3.5,0){4};\node at (4.5,0){5};\node at (5.5,0){6};\node at (6.5,0){7};\node at (8.5,0){1};\node at (9.5,0){2};\node at (10.5,0){3};\node at (11.5,0){4};\node at (12.5,0){5};\node at (13.5,0){6};\node at (14.5,0){7};\node at (16.5,0){1};\node at (17.5,0){2};\node at (18.5,0){3};\node at (19.5,0){4};\node at (20.5,0){5};\node at (21.5,0){6};\node at (22.5,0){7}; \node at (24,0.45){=F};

\node at (3.5,-1){\scriptsize$\beta^{7_1}=4111$};\node at (3.5,-2){\scriptsize$\beta^{7}=4311$};
\node at (11.5,-1){\scriptsize$\beta^{7_2}=4211$};\node at (11.5,-2){\scriptsize$\beta^{7}=4311$};
\node at (19.5,-1){\scriptsize$\beta^{7_3}=4311$};\node at (19.5,-2){\scriptsize$\beta^{7}=4311$};
\end{tikzpicture}$
\section{The Bruhat order in $\mathfrak{S}_n$ and orbits.}
\label{sec:bruhat}
 Let $\theta=\theta_1\ldots \theta_n\in\mathfrak{S}_n$, written in one  line notation. A pair $(i,j)$, with $i<j$, such that $\theta_i>\theta_j$, is said to be an inversion of $\theta$, and $\ell(\theta)$ denotes the number of inversions of $\theta$.
The Bruhat order in $\mathfrak{S}_n$ is  the partial order in  $\mathfrak{S}_n$ defined by the transitive closure of the relations:
$\label{closure}\nonumber\theta<t\theta$, if $\ell(\theta)<\ell(t\theta)$, with $t$ a transposition, $\theta\in \mathfrak{S}_n$.
Let $\theta=s_{i_N}\cdots s_{i_1}$ be a decomposition of $\theta$ into simple transpositions $s_i=(i\;i+1)$, $1\le i<n$. When $N=\ell(\theta)$, the number $N$ in a such decomposition is minimised, and it is said to be a  reduced decomposition of $\theta$. The longest permutation of $\mathfrak{S}_n$ is denoted by $\omega$.
Let $\lambda$ be a partition in $\mathbb{N}^n$. The Bruhat  ordering of the orbit of $\lambda$, $\mathfrak{S}_n\lambda$, is defined by taking the transitive closure
 of the relations
 $\label{inducedbruhat}\nonumber\alpha< t\alpha$, if $\alpha_i>\alpha_{j}$, $i<j$,  and  $t$ the transposition $(i\,j)$, $\alpha\in \mathfrak{S}_n\lambda$.
Given $\alpha\in \mathbb{N}^n$, a pair  $(i,j)$, with $i<j$, such that $\alpha_i<\alpha_j$, is called  an inversion of $\alpha$, and   $\iota(\alpha)$  denotes the number of inversions of $\alpha$.
 We may write $\alpha<\beta$ if $\iota(\alpha)<\iota(\beta)$ and $\beta=\tau\alpha$ for some permutation $\tau$ in $\mathfrak{S}_n$ that can be written as a product of transpositions each increasing the number of inversions when passing from $\alpha$ to $\beta$. Recalling the exchange condition and the property of the Bruhat ordering   which says that if $\theta\le \sigma$ and $s$ is a simple transposition then either $s\theta\le \sigma$ or $s\theta\le s\sigma$, \cite{humphreys}, one has the useful.
\begin{lem}\label{1layer-easy}
Let  $\alpha$, $\beta\in \mathbb{N}^n$ be rearrangements of each other.
Let $1\le k<n$ and $1\le$ $ r_1<$ $r_2<$ $\dots$$<$ $r_k<n$. Then

$(a)$ $\alpha>s_{r_1}\alpha$ 
$\Rightarrow $
$ s_{{r_k}}\dots s_{{r_2}}\alpha>s_{{r_k}}\dots s_{{r_2}}s_{{r_1}}\alpha$ and  $~~\alpha<s_{r_1}\alpha$ 
$ \Rightarrow$
$ s_{{r_k}}\dots s_{{r_2}}\alpha<s_{{r_k}}\dots s_{{r_2}}s_{{r_1}}\alpha$.

$(b)$  $\alpha>s_{r_1}\alpha$ and 
$s_{{r_{k-1}}}\dots s_{{r_2}}\alpha$ $>$ $s_{r_{k}} s_{r_{k-1}}$ $\dots $ $s_{{r_2}}$ $\alpha$   $ \Rightarrow$ $s_{{r_{k-1}}}\dots s_{{r_2}}s_{{r_1}}\alpha> s_{r_k}s_{{r_{k-1}}}\dots s_{{r_2}} s_{{r_1}}\alpha.$

$(c)$
 $\beta\nleq  \omega s_{r_k}\cdots \widehat s_{r_{i}}\cdots s_{r_1}\alpha$, $1\le i<k$, and
 $\beta $ $\leq $ $\omega $ $s_{r_k}$ $\cdots$  $s_{r_1}$ $\alpha$ $ \Rightarrow$
$s_{r_{k}}\cdots  s_{r_{1}}\alpha<\dots < s_{r_{1}}\alpha<\alpha$.

$(d)$ \label{nice} 
 $\beta\le s_k\alpha$ and $\beta\nleq
\alpha$ iff  $s_k\beta\leq \alpha$  and $\beta\nleq
\alpha$.
 Moreover, $\alpha_k>\alpha_{k+1}$ and $\beta_k<\beta_{k+1}$.
\end{lem}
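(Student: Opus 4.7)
The plan is to dispatch (a)--(d) in sequence. Parts (a) and (b) share one observation: the permutation $\tau=s_{r_k}\cdots s_{r_2}$ in (a) (respectively $\tau_{k-1}=s_{r_{k-1}}\cdots s_{r_2}$ in (b)) uses only generators of index $\ge r_2>r_1$, so it fixes position $r_1$ and sends position $r_1+1$ to some position $P\ge r_1+1$. Concretely $P=r_1+1$ if $r_2>r_1+1$, while if $r_2=r_1+1$ a short induction on the initial consecutive run $r_2,r_3,\ldots$ gives $P=r_{j^*}+1$ where $j^*$ is the last index in that run; in particular the (b)-version $P'=\tau_{k-1}(r_1+1)$ satisfies $P'\le r_{k-1}+1\le r_k$, so $P'\ne r_k+1$.

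With $P$ in hand, $\tau\alpha$ and $\tau s_{r_1}\alpha$ coincide outside positions $r_1,P$ and differ there by the transposition $(r_1,P)$: $(\tau\alpha)_{r_1}=\alpha_{r_1}$, $(\tau\alpha)_P=\alpha_{r_1+1}$, interchanged in $\tau s_{r_1}\alpha$. The Bruhat generator rule on the orbit then gives $\tau\alpha>\tau s_{r_1}\alpha$ precisely when $\alpha_{r_1}<\alpha_{r_1+1}$, i.e.\ precisely when $\alpha>s_{r_1}\alpha$, and symmetrically in the reverse direction; this proves (a). For (b), apply the same tracking to $\tau_{k-1}$: if $P'\ne r_k$ the entries of $\tau_{k-1}\alpha$ and $\tau_{k-1}s_{r_1}\alpha$ at $r_k,r_k+1$ coincide, so the hypothesis propagates verbatim; if $P'=r_k$ then $(\tau_{k-1}\alpha)_{r_k}=\alpha_{r_1+1}$, $(\tau_{k-1}s_{r_1}\alpha)_{r_k}=\alpha_{r_1}$, and both compositions carry $\alpha_{r_k+1}$ at position $r_k+1$, so the two hypotheses telescope to $\alpha_{r_1}<\alpha_{r_1+1}<\alpha_{r_k+1}$, yielding the required descent of $\tau_{k-1}s_{r_1}\alpha$ at $r_k$.

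For (c), set $\mu^j=s_{r_j}\cdots s_{r_1}\alpha$ and $\mu_i=s_{r_k}\cdots\widehat{s_{r_i}}\cdots s_{r_1}\alpha$. Because $\omega$ reverses Bruhat on the orbit, the hypotheses $\beta\le\omega\mu^k$, $\beta\nleq\omega\mu_i$ become $\omega\beta\ge\mu^k$, $\omega\beta\not\ge\mu_i$. If step $i$ in the product were a Bruhat ascent (that is, $s_{r_i}\mu^{i-1}>\mu^{i-1}$), applying (a) to $\mu^{i-1}$ with the strictly increasing indices $r_i<\cdots<r_k$ would force $\mu^k>\mu_i$, whence $\omega\beta\ge\mu^k\ge\mu_i$, contradicting $\omega\beta\not\ge\mu_i$. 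Therefore each step $i$ covered by the hypothesis is a Bruhat descent, and iterating assembles the claimed strictly decreasing chain $\alpha>s_{r_1}\alpha>\cdots>s_{r_k}\cdots s_{r_1}\alpha$. Finally, (d) is a direct application of the lifting property of Bruhat order recalled in the excerpt: from $\beta\le s_k\alpha$, $\beta\nleq\alpha$ one first rules out $\alpha\ge s_k\alpha$ (else $\beta\le s_k\alpha\le\alpha$), forcing $\alpha_k>\alpha_{k+1}$, and then lifting with $u=\beta,v=s_k\alpha,s=s_k$ yields $s_k\beta\le\alpha$, the alternative branch of the disjunction being excluded by $\beta\nleq\alpha$. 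The reverse implication is symmetric, and $\beta_k<\beta_{k+1}$ is forced because otherwise $\beta\le s_k\beta\le\alpha$, contradicting $\beta\nleq\alpha$. The main technical nuisance throughout is the coincidence $r_2=r_1+1$ in (a) and (b), which is absorbed once the correct destination $P$ of position $r_1+1$ has been identified.
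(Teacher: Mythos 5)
The paper gives no written proof of this lemma---it is stated as a consequence of the exchange condition and of the lifting property ``$\theta\le\sigma$ implies $s\theta\le\sigma$ or $s\theta\le s\sigma$''---so there is nothing to compare your argument with line by line. Your treatment of $(a)$ and $(b)$ is correct and self-contained: tracking where $\tau=s_{r_k}\cdots s_{r_2}$ sends positions $r_1$ and $r_1+1$, observing that $\tau\alpha$ and $\tau s_{r_1}\alpha$ differ by the transposition $(r_1,P)$ with $r_1<P$, and reading off the comparison from the generating relation of the orbit order is exactly the right elementary argument; your handling of the consecutive-run case $r_2=r_1+1$ (including $P'=r_k$ in $(b)$) is the point most people miss. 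Part $(c)$ is also essentially right, with two caveats: your trichotomy omits the case $s_{r_i}\mu^{i-1}=\mu^{i-1}$ (which gives $\mu_i=\mu^k$ and an immediate contradiction with $\omega\beta\not\ge\mu_i$, so it is a one-line fix), and the final link $s_{r_{k-1}}\cdots s_{r_1}\alpha>s_{r_k}\cdots s_{r_1}\alpha$ of the chain needs the non-comparability for $i=k$, which the printed range $1\le i<k$ does not supply---the statement is false for $k=1$ as literally written, so the intended range must be $1\le i\le k$, and you should say explicitly that you are using it.

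The genuine gap is in the forward direction of $(d)$. From $\beta\le s_k\alpha$ the cited property gives the inclusive disjunction ``$s_k\beta\le s_k\alpha$ or $s_k\beta\le\alpha$'', and you claim the first branch is ``excluded by $\beta\nleq\alpha$''. It is not: whenever the desired conclusion holds one typically has $s_k\beta<\beta\le s_k\alpha$, so the first branch is true as well; since the disjunction is inclusive, failing to falsify one branch does not establish the other. (Your derivation of $\beta_k<\beta_{k+1}$ cannot rescue this, because it presupposes $s_k\beta\le\alpha$.) What is actually needed is the directional lifting property: if $u\le w$, $sw<w$ and $su>u$, then $u\le sw$. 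One first rules out $\beta_k>\beta_{k+1}$ by applying it with $u=\beta$, $w=s_k\alpha$ (it would give $\beta\le\alpha$), handles $\beta_k=\beta_{k+1}$ separately (then $\beta=s_k\beta$ and the orbit version of the same property again forces $\beta\le\alpha$), and only then, knowing $s_k\beta<\beta$, applies it with $u=s_k\beta$, $w=s_k\alpha$ to get $s_k\beta\le\alpha$. The reverse implication, by contrast, really is a direct application of the property you quote (applied to $s_k\beta\le\alpha$, the branch $\beta\le\alpha$ is excluded by hypothesis), so the two directions are not ``symmetric'' as you assert.
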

\section{Creation and annihilation of corner cells in a Ferrers shape. 
}
\label{sec:crys}
Crystal operators or coplactic operations $e_r$, $f_r$, $1\le r<n$, are defined on any word over the alphabet $[n]$, see \cite{thibon} and \cite{ hoon}. These operations can  be extended to biwords  in two ways. Either by considering $w$ in lexicographic order, with respect to the first row, or to the second. Let  $w=\binom{\bf u}
{\bf v}$ be in lexicographic order, with respect to the first row. Write $e_rw:=\binom{\bf u}
{e_r\bf v}$ and,  similarly, for
  $f_rw$.
 Let $\binom{\bf k}
{\bf l}$ be  the  biword $w$ rearranged in lexicographic order, with respect to the second row.
  Write $\overline{w}:=\binom{\bf v}
{\bf u}$ and $ e^*_rw:=e_r\overline{w}=\binom{\bf l}
{e_r\bf k}$ and,  similarly, for
  ${f}_r^*w$.
  The resulting biwords are still in lexicographic order with respect to the first row. A biword $w$ can be seen as a multiset of cells in a Ferrers shape $\lambda$,
  embedded in a $n  \times n$ square by putting  a cross "${\bf X}$" in the cell $(i,j)$ of $\lambda$ for each biletter $\binom{j}{i}$ in $w$.
As our running example, consider the  biword 
$\label{biw}\scriptsize
 w=\left(\begin{array}{cccccccccccccccccc}
 1&1&2&2&3&3&3&4&4&4&5&5&5&6&7\\4&4&1&3&5&5&5&3&3&4&3&4&4&2&2\end{array}\right)$
 in lexicographic order, with respect to the first row,  over the alphabet $[7]$, and its representation in the Ferrers shape
$\begin{tikzpicture}[scale=0.42]
 \draw[line width=1pt] (6,1) rectangle (7,2);
\draw[line width=1pt] (0,0) rectangle (7,1);
\draw[line width=1pt] (0,1) rectangle (6,2);\draw[line width=1pt] (0,2) rectangle (5,3);\draw[line width=1pt] (0,3) rectangle (5,4);\draw[line width=1pt] (0,4) rectangle (3,5);\draw[line width=0.95pt] (0,5) rectangle (2,5);\draw[line width=0.95pt] (0,4) rectangle (1,4);
\draw[line width=1pt] (1,0) -- (1,5);\draw[line width=1pt] (2,0) -- (2,5);\draw[line width=1pt] (3,0) -- (3,5);\draw[line width=0.95pt] (4,0) -- (4,4);\draw[line width=1pt] (5,0) --(5,4);\draw[line width=1pt] (6,0) -- (6,2);
\node[color=brown] at (0.3,3.3){\small$\bf X$}; \node[color=blue] at (0.7,3.7){\small$\bf X$}; \node at (1.5,0.5){$\bf X$}; \node[color=blue]  at (1.5,2.5){$\bf X$};\node  at (2.25,4.3){\tiny$\bf X$}; \node at (2.5,4.5){{\tiny$\bf X$}};
 \node at (2.75,4.7){{\tiny$\bf X$}};
\node[color=brown]  at (3.3,2.3){{\small$\bf X$}};
\node  at (3.7,2.7){{\small$\bf X$}};\node[color=red]  at (3.5,3.5){{\small$\bf X$}};\node[color=red]  at (4.5,2.5){{$\bf X$}};\node  at (4.3,3.3){\small$\bf X$};\node  at (4.7,3.6){\small$\bf X$};
\node  at (5.5,1.5){$\bf X$};  \node at (6.5,1.5){$\bf X$};
\end{tikzpicture}$.
The crystal operator $e_3$  acts on $w$ through its action on the second row of $w$. 
 Ignore  all  entries different from $3$ and $4$, 
 obtaining the subword $443334344$. Match, in the usual way, all $43$ (in blue in the example below), remaining the subword $344$. Change to $3$ the leftmost $4$, giving  $334$.   Applying again the operator $e_3$
 means to apply 
 $e_3$ to the subword $344$ which change to $333$,  obtaining
 $\scriptsize\left(\begin{array}{cccccccccccccccc}
1&1&2&4&4&4&5&5&5\\{\color{brown}\bold 4}&{\color{blue}\bold 4}&{\color{blue}\bold 3}&{\color{brown}\bold 3}&\bold 3&{\color{red}\bold 4}&{\color{red}\bold 3}
&{\bold 4}&{\bold 4}
\end{array}\right)\overset{e^2_3}{\underset{f^2_3}{\rightleftarrows}}
\left(\begin{array}{cccccccccccccccc}
1&1&2&4&4&4&5&5&5\\{\color{brown}\bold 4}&{\color{blue}\bold 4}&{\color{blue}\bold 3}&{\color{brown}\bold 3}&{\bold 3}&{\color{red}\bold 4}&{\color{red}\bold 3}
&{\bold 3}&{\bold 3}
\end{array}\right)
$.
    The action of  $e_r$ or $f_r$, as long as it is possible, on the second row of  $w$, translates to the rows $r$ and $r+1$ of the Ferrers shape as a matching of crosses followed by sliding the unmatched crosses in those rows. Write $\Upsilon_r$ ($\overline\Upsilon_r$) for applying $m$ times the crystal operator $e_r$ ($f_r$), to the second row of $w$, where $m$ is the number of unmatched $r+1$ ($r$) in the second row of $w$, then
     $\scriptsize\begin{tikzpicture}[scale=0.3]
\draw[line width=1pt] (0,0) rectangle (8,2);
\draw[line width=1pt] (0,1) -- (8,1);
\draw[line width=0.3pt] (1,1) -- (1,2);
\draw[line width=1pt] (2,0) -- (2,2);
\draw[line width=1pt] (3,0) -- (3,2);
\draw[line width=1pt] (4,0) -- (4,2);
\draw[line width=0.3pt] (5,0) -- (5,1);
\draw[line width=1pt] (6,0) -- (6,2);
\draw[line width=0.3pt] (7,1) -- (7,2);
\node[color=brown] at (0.5,1.5){ \bf{X}};\node[color=blue] at (1.5,1.5){ \bf{X}};
\node[color=blue] at (2.5,0.5){ \bf{X}};
\node[color=brown] at (4.5,0.5){ \bf{X}};\node at (5.5,0.5){ \bf{X}};
\node[color=red] at (5,1.5){ \bf{X}};
\node at (6.5,1.5){ \bf{X}};\node at (7.5,1.5){ \bf{X}};
\node[color=red] at (7,0.5){ \bf{X}};
\end{tikzpicture}$ $\scriptsize\begin{tikzpicture}[scale=0.3]
 \draw[line width=1pt] (0,0) rectangle (9,2);
\draw[line width=1pt] (0,1) -- (9,1);
\draw[line width=0.3pt] (1,1) -- (1,2);
\draw[line width=1pt] (2,0) -- (2,2);
\draw[line width=1pt] (3,0) -- (3,2);
\draw[line width=1pt] (4,0) -- (4,2);
\draw[line width=0.3pt] (5,0) -- (5,1);
\draw[line width=1pt] (6,0) -- (6,2);
\draw[line width=0.3pt] (7,0) -- (7,1);
\draw[line width=0.3pt] (8,0) -- (8,1);
\node[color=brown] at (0.5,1.5){ \bf{X}};\node[color=blue] at (1.5,1.5){ \bf{X}};
\node[color=blue] at (2.5,0.5){ \bf{X}};
\node[color=brown] at (4.5,0.5){ \bf{X}};\node at (5.5,0.5){ \bf{X}};
\node[color=red] at (5,1.5){ \bf{X}};
\node at (7.5,0.5){ \bf{X}};\node at (8.5,0.5){ \bf{X}};
\node[color=red] at (6.5,0.5){ \bf{X}};
\node at (-2,1.3){$\longrightarrow$};
\node at (-2,1.8){$\tiny \Upsilon_3$};\node at (-2,0.0){$\longleftarrow$};\node at (-2,0.6){$\tiny \overline \Upsilon_3$};
\end{tikzpicture}
$.
 Consider now the $01$-filling representation of the  biwords $w$  and $\Upsilon_r{w}$  in the Ferrers shape $\lambda$ embedded in a rectangle shape.
 Apply the local rules, as defined in  Section \ref{sec:rsk}. Notice that in the $01$-filling of ${w}$, we match a cross in row $r+1$ with a cross to the SE, in row $r$, such that in these two rows  there is no unmatched cross in a column between them.
These two  growth diagrams have the same bottom sequences of partitions  and the left sequences   differ only in the partitions assigned to the  rows $r$ and $r+1.$
Let $w_r$ and $\widetilde{w}_r$ be the  biwords  obtained from  $w$ and $\Upsilon_r{w},$ after deleting all the biletters  with bottom rows different from $r$ and $r+1$.
The translation of the movement of the cells  in the  Ferrers shape to the 01-filling is as follows.
In the $01$-filling of $w_r$ move up, without changing of columns, the matched crosses of row $r+1$, say $s$ crosses, to the top most $s$ rows such that they form SW chain. Then slide down  the remaining unmatched  crosses, from row $r+1$ to row $r$, without changing of columns, such that these crosses and all the crosses of row $r$ form a SW chain.  The result is the 01-filling corresponding to $\widetilde{w}_r.$  We illustrate with our running example.

$\begin{tikzpicture}[scale=0.31]
\draw[line width=1.5pt] (0,0) rectangle (15,17);
\draw[line width=0.5pt] (0,0) -- (15,0);
\draw[line width=1.5pt] (0,1) -- (15,1);
\draw[line width=0.5pt] (0,2) -- (15,2);
\draw[line width=1.5pt] (0,3) -- (15,3);
\draw[line width=0.5pt] (0,4) -- (15,4);
\draw[line width=0.5pt] (0,5) -- (15,5);
\draw[line width=0.5pt] (0,5) -- (15,5);
\draw[line width=0.5pt] (0,6) -- (15,6);
\draw[line width=1.5pt] (0,7) -- (15,7);
\draw[line width=0.5pt] (0,8) -- (15,8);
\draw[line width=0.5pt] (0,9) -- (15,9);
\draw[line width=0.5pt] (0,10) -- (15,10);
\draw[line width=0.5pt] (0,11) -- (15,11);
\draw[line width=1.5pt] (0,12) -- (15,12);
\draw[line width=0.5pt] (0,13) -- (15,13);
\draw[line width=0.5pt] (0,14) -- (15,14);
\draw[line width=1.5pt] (0,15) -- (15,15);
\draw[line width=1.5pt] (0,16) -- (15,16);
\draw[line width=1.5pt] (0,0) -- (0,17);
\draw[line width=0.5pt] (1,0) -- (1,17);
\draw[line width=1.5pt] (2,0) -- (2,17);
\draw[line width=0.5pt] (3,0) -- (3,17);
\draw[line width=1.5pt] (4,0) -- (4,17);
\draw[line width=0.5pt] (5,0) -- (5,17);
\draw[line width=0.5pt] (6,0) -- (6,17);
\draw[line width=1.5pt] (7,0) -- (7,17);
\draw[line width=0.5pt] (8,0) -- (8,17);
\draw[line width=0.5pt] (9,0) -- (9,17);
\draw[line width=1.5pt] (10,0) -- (10,17);
\draw[line width=0.5pt] (11,0) -- (11,17);
\draw[line width=0.5pt] (12,0) -- (12,17);
\draw[line width=1.5pt] (13,0) -- (13,17);
\draw[line width=1.5pt] (14,0) -- (14,17);
\node[color=violet]  at (0.5,7.5){$\bf X$};\node[color=blue] at
(1.5,8.5){$\bf X$};\node at (2.5,0.5){$\bf X$};\node at
(3.5,3.5){$\bf X$};\node  at (4.5,12.5){$\bf X$};\node at
(5.5,13.5){$\bf X$};\node at (6.5,14.5){$\bf X$};
\node[color=violet]  at (7.5,4.5){$\bf X$};\node[color=blue]  at
(8.5,5.5){$\bf X$};\node[color=red] at
(9.5,9.5){$\bf X$};\node[color=red]  at (10.5,6.5){$\bf X$};
\node at (11.5,10.5){$\bf X$};\node at (12.5,11.5){$\bf X$};\node at
(13.5,1.5){$\bf X$};\node at (14.5,2.5){$\bf X$};
\node at (8,18){}; \node at (16,3){$\rightarrow$};\node at
(16,4){$\Upsilon_3$};
\node at(-1.3,17){$\emptyset$};\node at(-1.3,16){$\emptyset$};\node
at(-1.3,15){$\emptyset$};\node at(-1.3,14){1};
\node at(-1.3,13){2};\node at(-1.3,12){3};\node at
(-1.5,11){31};\node at (-1.5,10){32};\node at (-1.5,9){33};\node at
(-1.5,8){43};\node at
(-1.5,7){53};\node at (-1.6,6){531};\node at (-1.6,5){541};\node at
(-1.6,4){551};\node at (-1.6,3){651};\node at (-1.8,2){6511};\node at
(-1.8,1){6521};\node at (-1.8,-1){7521};
 \node at (15,-1.3)  [rotate=90] {$\emptyset$};  \node at (14,-1.4)
[rotate=90] {{\small1}}; \node at (13,-1.4)  [rotate=90] {{\small2}};
\node at (12,-1.6)  [rotate=90] {{\small21}}; \node at (11,-1.6)
[rotate=90] {{\small22}};\node at (10,-1.9) [ rotate=90]{{\small32}};
\node at (9,-1.9) [ rotate=90]{{\small321}};\node at (8,-1.9)
[ rotate=90]{{\small421}};\node at (7,-1.9) [
rotate=90]{{\small521}};\node at (6,-2.1) [
rotate=90]{{\small5211}};\node at (5,-2.1) [
rotate=90]{{\small5221}};\node at
(4,-2.1) [ rotate=90]{{\small5321}};\node at (3,-2.1) [
rotate=90]{{\small6321}};\node at (2,-2.2) [rotate=90]{{\small7321}};\node
at (1,-2.2) [rotate=90]{{\small7421}};
\end{tikzpicture}$
$\begin{tikzpicture}[scale=0.31]
\draw[line width=1.5pt] (0,0) rectangle (15,17);
\draw[line width=0.5pt] (0,0) -- (15,0);
\draw[line width=1.5pt] (0,1) -- (15,1);
\draw[line width=0.5pt] (0,2) -- (15,2);
\draw[line width=1.5pt] (0,3) -- (15,3);
\draw[line width=0.5pt] (0,4) -- (15,4);
\draw[line width=0.5pt] (0,5) -- (15,5);
\draw[line width=0.5pt] (0,5) -- (15,5);
\draw[line width=0.5pt] (0,6) -- (15,6);
\draw[line width=0.5pt] (0,7) -- (15,7);
\draw[line width=0.5pt] (0,8) -- (15,8);
\draw[line width=1.5pt] (0,9) -- (15,9);
\draw[line width=0.5pt] (0,10) -- (15,10);
\draw[line width=0.5pt] (0,11) -- (15,11);
\draw[line width=1.5pt] (0,12) -- (15,12);
\draw[line width=0.5pt] (0,13) -- (15,13);
\draw[line width=0.5pt] (0,14) -- (15,14);
\draw[line width=1.5pt] (0,15) -- (15,15);
\draw[line width=1.5pt] (0,16) -- (15,16);
\draw[line width=1.5pt] (0,0) -- (0,17);
\draw[line width=0.5pt] (1,0) -- (1,17);
\draw[line width=1.5pt] (2,0) -- (2,17);
\draw[line width=0.5pt] (3,0) -- (3,17);
\draw[line width=1.5pt] (4,0) -- (4,17);
\draw[line width=0.5pt] (5,0) -- (5,17);
\draw[line width=0.5pt] (6,0) -- (6,17);
\draw[line width=1.5pt] (7,0) -- (7,17);
\draw[line width=0.5pt] (8,0) -- (8,17);
\draw[line width=0.5pt] (9,0) -- (9,17);
\draw[line width=1.5pt] (10,0) -- (10,17);
\draw[line width=0.5pt] (11,0) -- (11,17);
\draw[line width=0.5pt] (12,0) -- (12,17);
\draw[line width=1.5pt] (13,0) -- (13,17);
\draw[line width=1.5pt] (14,0) -- (14,17);
\node[color=violet]  at (0.5,9.5){$\bf X$};\node[color=blue] at
(1.5,10.5){$\bf X$};\node at (2.5,0.5){$\bf X$};\node at
(3.5,3.5){$\bf X$};\node  at (4.5,12.5){$\bf X$};\node at
(5.5,13.5){$\bf X$};\node at (6.5,14.5){$\bf X$};
\node[color=violet]  at (7.5,4.5){$\bf X$};\node[color=blue]  at
(8.5,5.5){$\bf X$};\node[color=red] at
(9.5,11.5){$\bf X$};\node[color=red]  at (10.5,6.5){$\bf X$};
\node at (11.5,7.5){$\bf X$};\node at (12.5,8.5){$\bf X$};\node at
(13.5,1.5){$\bf X$};\node at (14.5,2.5){$\bf X$};
\node at (8,18){};
\node at(-1.3,17){$\emptyset$};\node at(-1.3,16){$\emptyset$};\node
at(-1.3,15){$\emptyset$};\node at(-1.3,14){1};
\node at(-1.3,13){2};\node at(-1.3,12){3};\node at
(-1.5,11){31};\node at (-1.5,10){41};\node at (-1.5,9){51};\node at
(-1.6,8){511};\node at
(-1.6,7){521};\node at (-1.6,6){531};\node at (-1.6,5){541};\node at
(-1.6,4){551};\node at (-1.6,3){651};\node at (-1.8,2){6511};\node at
(-1.8,1){6521};\node at (-1.8,-1){7521};
 \node at (15,-1.3)  [rotate=90] {$\emptyset$};  \node at (14,-1.4)
[rotate=90] {{\small1}}; \node at (13,-1.4)  [rotate=90] {{\small2}};
\node at (12,-1.6)  [rotate=90] {{\small21}}; \node at (11,-1.6)
[rotate=90] {{\small22}};\node at (10,-1.9) [ rotate=90]{{\small32}};
\node at (9,-1.9) [ rotate=90]{{\small321}};\node at (8,-1.9)
[ rotate=90]{{\small421}};\node at (7,-1.9) [
rotate=90]{{\small521}};\node at (6,-2.1) [
rotate=90]{{\small5211}};\node at (5,-2.1) [
rotate=90]{{\small5221}};\node at
(4,-2.1) [ rotate=90]{{\small5321}};\node at (3,-2.1) [
rotate=90]{{\small6321}};\node at (2,-2.2) [rotate=90]{{\small7321}};\node
at (1,-2.2) [rotate=90]{{\small7421}};
\end{tikzpicture}$
$\begin{tikzpicture}[scale=0.33]
\draw[line width=0.5pt] (12,3) -- (19,3);
\node at (12.5,2.5){\scriptsize 1};\node at (13.5,2.5){\scriptsize 2};\node at (14.5,2.5){\scriptsize 3};\node
at (15.5,2.5){\scriptsize 4};\node at (16.5,2.5){\scriptsize 5};\node at (17.5,2.5){\scriptsize 6};\node at
(18.5,2.5){\scriptsize 7};
\draw[line width=0.5pt] (13,3) rectangle  (14,4);\draw[line width=0.5pt]
(13,4) rectangle  (14,5);
\draw[line width=0.5pt] (14,3) rectangle  (15,4);\draw[line width=0.5pt]
(15,4) rectangle  (16,5);\draw[line width=0.5pt] (15,5) rectangle
(16,6);\draw[line width=0.5pt] (15,6) rectangle  (16,7);\draw[line
width=0.5pt] (15,7) rectangle  (16,8);\draw[line width=0.5pt] (15,8)
rectangle  (16,9);\draw[line width=0.5pt] (15,9) rectangle  (16,10);
\draw[line width=0.5pt] (15,3) rectangle  (16,4);\draw[line width=0.5pt]
(16,6) rectangle  (17,7);\draw[line width=0.5pt] (16,7) rectangle  (17,8);
\draw[line width=0.5pt] (16,3) rectangle  (17,4);\draw[line width=0.5pt]
(16,4) rectangle  (17,5);\draw[line width=0.5pt] (16,5) rectangle  (17,6);
\node at (13.5,3.5){\scriptsize 2};\node at (13.5,4.5){\scriptsize 2};
\node at (14.5,3.5){\scriptsize 3};\node at (15.5,4.5){\scriptsize 4};\node at (15.5,5.5){\scriptsize 4};\node
at (15.5,6.5){\scriptsize 4};\node at (15.5,7.5){\scriptsize 4};\node at (15.5,8.5){\scriptsize 3};\node at
(15.5,9.5){\scriptsize 1};
\node at (15.5,3.5){\scriptsize 4};\node at (16.5,7.5){\scriptsize 3};\node at (16.5,6.5){\scriptsize 3};
\node at (16.5,3.5){\scriptsize 5};\node at (16.5,4.5){\scriptsize 5};\node at (16.5,5.5){\scriptsize 5};
\node at (15,1){\scriptsize$F$};

\draw[line width=0.5pt] (12,13) -- (19,13);
\node at (12.5,12.5){\scriptsize 1};
\node at (13.5,12.5){\scriptsize 2};
\node at (14.5,12.5){\scriptsize 3};
\node at (15.5,12.5){\scriptsize 4};
\node at (16.5,12.5){\scriptsize 5};
\node at (17.5,12.5){\scriptsize 6};
\node at (18.5,12.5){\scriptsize 7};
\draw[line width=0.5pt] (13,13) rectangle  (14,14);\draw[line width=0.5pt]
(13,14) rectangle  (14,15);
\draw[line width=0.5pt] (14,13) rectangle  (15,14);\draw[line width=0.5pt]
(14,14) rectangle  (15,15);\draw[line width=0.5pt] (14,15) rectangle
(15,16);\draw[line width=0.5pt] (14,16) rectangle  (15,17);\draw[line
width=0.5pt] (14,17) rectangle  (15,18);\draw[line width=0.5pt] (14,18)
rectangle  (15,19);\draw[line width=0.5pt] (14,19) rectangle  (15,20);
\draw[line width=0.5pt] (15,13) rectangle  (16,14);\draw[line width=0.5pt]
(16,16) rectangle  (17,17);\draw[line width=0.5pt] (16,17) rectangle  (17,18);
\draw[line width=0.5pt] (16,13) rectangle  (17,14);\draw[line width=0.5pt]
(16,14) rectangle  (17,15);\draw[line width=0.5pt] (16,15) rectangle  (17,16);
\node at (13.5,13.5){\scriptsize 2};
\node at (13.5,14.5){\scriptsize 2};
\node at (14.5,13.5){\scriptsize 3};
\node at (14.5,14.5){\scriptsize 3};
\node at (14.5,15.5){\scriptsize 3};
\node at (14.5,16.5){\scriptsize 3};
\node at (14.5,17.5){\scriptsize 3};
\node at (14.5,18.5){\scriptsize 3};
\node at (14.5,19.5){\scriptsize 1};
\node at (15.5,13.5){\scriptsize 4};
\node at (16.5,17.5){\scriptsize 4};
\node at (16.5,16.5){\scriptsize 4};
\node at (16.5,13.5){\scriptsize 5};
\node at (16.5,14.5){\scriptsize 5};
\node at (16.5,15.5){\scriptsize 5};
\node at (15,11){\scriptsize $\Upsilon_3{F}$};

\end{tikzpicture}$
$~~\begin{tikzpicture}[scale=0.3]
\draw[line width=0.5pt] (20,3) -- (27,3);
\node at (20.5,2.5){\scriptsize 1};\node at (21.5,2.5){\scriptsize 2};\node at (22.5,2.5){\scriptsize 3};\node
at (23.5,2.5){\scriptsize 4};\node at (24.5,2.5){\scriptsize 5};\node at (25.5,2.5){\scriptsize 6};\node at
(26.5,2.5){\scriptsize 7};
\draw[line width=0.5pt] (22,3) rectangle  (23,4);
\draw[line width=0.5pt] (22,4) rectangle  (23,5);
\draw[line width=0.5pt] (22,5) rectangle  (23,6);\draw[line width=0.5pt]
(22,6) rectangle  (23,7);\draw[line width=0.5pt] (22,7) rectangle  (23,8);
\draw[line width=0.5pt] (23,3) rectangle  (24,4);
\draw[line width=0.5pt] (24,3) rectangle  (25,4);\draw[line width=0.5pt]
(24,4) rectangle  (25,5);\draw[line width=0.5pt] (24,5) rectangle  (25,6);
\draw[line width=0.5pt] (24,6) rectangle  (25,7);\draw[line width=0.5pt]
(24,7) rectangle  (25,8);
\draw[line width=0.5pt] (24,8) rectangle  (25,9);\draw[line width=0.5pt]
(24,9) rectangle  (25,10);
\draw[line width=0.5pt] (26,3) rectangle  (27,4);
\draw[line width=0.5pt] (26,4) rectangle  (27,5);
\node at (22.5,3.5){\scriptsize 3};\node at (22.5,4.5){\scriptsize 3};
\node at (22.5,5.5){\scriptsize 3};\node at (22.5,6.5){\scriptsize 1};\node at (22.5,7.5){\scriptsize 1};
\node at (23.5,3.5){\scriptsize 4};
\node at (24.5,3.5){\scriptsize 5};\node at (24.5,4.5){\scriptsize 5};\node at (24.5,5.5){\scriptsize 5};
\node at (24.5,6.5){\scriptsize 4};\node at (24.5,7.5){\scriptsize 4};\node at (24.5,8.5){\scriptsize 2};\node
at (24.5,9.5){\scriptsize 2};\node at (26.5,3.5){\scriptsize 7};\node at (26.5,4.5){\scriptsize 6};
\node at (23,1){\scriptsize $G$};
\end{tikzpicture}$
\begin{thm} \label{corcor1}  Let  $\lambda$ be a Ferrers shape and $w$ a biword  consisting of  a multiset of cells of $\lambda$ containing the cell $(r+1,\lambda_{r+1})$ with multiplicity at least one. Let $\Phi(w)=(F,G)$ where $sh(F)=\nu$ and $sh(G)=\beta$.
The following holds

$(a)$ If $\nu_r<\nu_{r+1}$, for some $r\ge 1$, then  $sh(\Upsilon_r F)=s_r\nu.$

$(b)$ $\Phi(\Upsilon_r w)=(\Upsilon_r{F},G)$ and  $\Phi(\Upsilon_r\overline w)=(\Upsilon_r{G,F})$.

$(c)$ If  $\lambda_r=\lambda_{r+1}> \lambda_{r+2}\ge 0$, for some $r\ge 1$, then $\nu_r<\nu_{r+1}$ and
$sh(\Upsilon_r{F})=s_r\nu$. Moreover,
$\Upsilon_r{w}$  fits the Ferrers shape $\lambda$ with the cell $(r+1,\lambda_{r+1})$ deleted.

$(d)$ If $\mu:=\overline\lambda$ is the transpose of the Ferrers shape $\lambda$ , and $\mu_{r}=\mu_{r+1}> \mu_{r+2}\ge 0$, for some $r\ge 1$,
  then $\beta_{r}<\beta_{{r+1}}$  and
$sh(\Upsilon_r{G})=s_{r}\beta$. Moreover, $\Upsilon_r{\overline w}$
fits the Ferrers shape $\lambda$ with the cell  $(\mu_{r+1},{r+1})$
deleted.
\end{thm}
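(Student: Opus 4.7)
The plan is to settle (b) first, then deduce (a) and (c) by examining how the crystal action of $\Upsilon_r$ is reflected in Mason's SSAF $F$, and finally obtain (d) by a transpose argument. Throughout, the growth-diagram description of $\Phi$ from Section~\ref{sec:rsk} is the main tool: applying $\Upsilon_r$ to the second row of $w$ modifies the $01$-filling in rows $r$ and $r+1$ only and, as remarked in the text preceding the theorem, this leaves the bottom-row labelling of the growth diagram unchanged while altering the left-column labelling only at the thick rows $r$ and $r+1$.

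For (b), the preservation of the bottom labelling yields immediately that $G$ is unchanged, so only the first component of $\Phi(\Upsilon_r w)$ needs to be identified with $\Upsilon_r F$. I would track entries $r$ and $r+1$ during the $\varrho$-construction: the $(r+1)$-cells of $F$ coming from unmatched $(r+1)$'s in the second row of $w$ are replaced by $r$-cells in the new SSAF, with the remaining $(r+1)$-cells unchanged. Using Mason's leftmost-placement rule together with the triple constraint of Remark~$1$, one shows that the replaced cells land exactly where the classical crystal action would dictate, yielding the first identity. The second identity $\Phi(\Upsilon_r\overline w)=(\Upsilon_r G,F)$ then follows by applying the first identity to $\overline w$ and invoking the RSK symmetry $\Phi(\overline w)=(G,F)$.

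For (a), if $\nu_r<\nu_{r+1}$ then column $r$ of $F$ is strictly shorter than column $r+1$, and the unmatched $(r+1)$-entries converted by $\Upsilon_r$ shift from the taller column to the shorter column; the leftmost-placement rule combined with Remark~$1$ forces the post-conversion heights to become $\nu_{r+1}$ at position $r$ and $\nu_r$ at position $r+1$, so $sh(\Upsilon_r F)=s_r\nu$. Part (c) is then proved as follows: since $\lambda_r=\lambda_{r+1}$, no biletter $\binom{j}{r}$ with $j>\lambda_r$ can appear in $w$, so every biletter $\binom{\lambda_r}{r+1}$ is unmatched in the $\{r,r+1\}$-subword of the second row; $\Upsilon_r$ replaces each such biletter by $\binom{\lambda_r}{r}$, which still fits $\lambda$ since $(r,\lambda_r)\in\lambda$, and so $\Upsilon_r w$ fits $\lambda\setminus\{(r+1,\lambda_{r+1})\}$. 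In particular $\Upsilon_r w\ne w$; by (b) this forces $\Upsilon_r F\ne F$, which in turn rules out $\nu_r\ge\nu_{r+1}$ (otherwise the crystal action would be trivial), and then (a) completes (c). Finally, (d) is the column-wise transpose: applying (c) to the biword $\overline w$, whose cells in the transposed shape $\mu=\overline\lambda$ satisfy the stated hypotheses, and using the second identity of (b) yields the conclusions for $G,\beta$ and for $\Upsilon_r\overline w$. The main obstacle is the commutation underlying (b): carefully verifying, via Mason's leftmost-placement rule and the triple condition of Remark~$1$, that the $\varrho$-construction processes the $\Upsilon_r$-modified sequence of partitions in rows $r$ and $r+1$ so that the substituted $r$-entries land in precisely those cells required for $sh(\Upsilon_r F)=s_r\nu$.
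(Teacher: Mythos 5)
Your treatment of (a), (b) and (d) follows essentially the same route as the paper: coplacticity of $e_r,f_r$ preserves the bottom labelling of the growth diagram (hence $G$), the second identity in (b) comes from transposing the diagram through the NE--SW diagonal, (a) rests on Remark~$1$ together with Mason's leftmost-placement rule, and (d) is obtained by passing to $\overline w$ in $\mu=\overline\lambda$. The purely combinatorial part of your (c) --- that the cross at $(r+1,\lambda_{r+1})$ is necessarily unmatched because $\lambda_r=\lambda_{r+1}$ leaves no room for a partner to its SE in row $r$, and that the slid-down unmatched crosses all land inside $\lambda$, so $\Upsilon_r w$ fits $\lambda\setminus\{(r+1,\lambda_{r+1})\}$ --- is correct and is essentially what the paper leaves implicit in the ``Moreover'' clause.

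The gap is in your derivation of $\nu_r<\nu_{r+1}$ in (c). You argue: $\Upsilon_r w\ne w$, hence by (b) and injectivity of $\Phi$ one has $\Upsilon_r F\ne F$, hence $\nu_r\ge\nu_{r+1}$ is impossible ``otherwise the crystal action would be trivial''. That last step uses the implication ``$\nu_r\ge\nu_{r+1}\Rightarrow e_r$ acts trivially on $F$'', which is the \emph{converse} of what part (a) provides: the Remark~$1$ argument in (a) shows only that $\nu_r<\nu_{r+1}$ forces an unmatched $r+1$, not that $\nu_r\ge\nu_{r+1}$ forbids one (an $r+1$ may sit in any column $j\ge r+1$ of $F$, so inspecting columns $r$ and $r+1$ alone does not settle it). The converse is a genuine statement relating the right key to the position of a tableau on its $r$-string (equivalently, that every tableau in an atom $\widehat\kappa_\nu$ with $\nu_r\ge\nu_{r+1}$ is at the top of its $r$-string); it is true, but you neither prove it nor cite it, and nothing established earlier in the paper yields it. The paper avoids the issue by proving $\nu_r<\nu_{r+1}$ directly from the growth diagram: since $\lambda_{r+1}>\lambda_{r+2}$ the left labelling already gives $\nu_{r+1}\ge 1$ on reaching thick row $r$, and the occupied cell $(r+1,\lambda_{r+1})$ supplies a cross strictly to the right of every cross in row $r$, which by the local rules prevents column $r$ of the SSAF from ever catching up with column $r+1$. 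You should either supply a proof (or a precise reference, e.g.\ to Mason's analysis of the operator $\Theta_r$) for the converse implication you invoke, or replace that step with a direct growth-diagram argument of this kind.
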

\begin{proof} $(a)$ This is a consequence of the inductive definition of Demazure crystal  in the crystal structure of tableaux. \cite{masondemazure} has translated the operator $f_r$ on SSYT's into the operator $\Theta_r$ on SSAF's.We use it to show how the $sh(F)$ change under the action of $\Upsilon_r $. Since $\nu_r<\nu_{r+1}$, from Remark 1,
the  triples in columns
$r$ and $r + 1$ of $F$ are so that if  $r$ appears
$k$ times in the column $r$, there should be $r+1$  at least $k + 1$ times in the
column $r + 1$. Hence there is at least one unmatched $r + 1$ in column
$r + 1$.  The reverse of the operator $\Theta_r$,   the analogue of $e_r$, applied as long as  possible, gives
$sh(\Upsilon_r F)=s_r\nu$.

$(b)$ Consider  the $01$-filling of the  biwords $w$  and $\Upsilon_r{w}$  and their RRSK growth diagrams.  The operations $e_r$ and $f_r$ are coplactic, see \cite{thibon}. The two  growth diagrams have then the same bottom sequence of partitions and $\Phi(\Upsilon_r w)=(\Upsilon_r{F},G)$. Transposing the RRSK  growth diagram, through the NE-SW diagonal, gives the RRSK growth diagrams of $\overline w$ and  $\Upsilon_r\overline w$, respectively. Thus  $\Phi(\Upsilon_r\overline w)=(\Upsilon_r{G},F)$.

$(c)$ One  just has to prove that $\nu_r<\nu_{r+1}$. Let $\mathcal{G}$ be the growth diagram of the RRSK of $w$. Consider its left side. Since $\lambda_{r+1}>\lambda_{r+2}$, the partition, in the thin row below the thick row $r+1$ of  $\mathcal{G}$, has one more component. At this level, then the SSAF has  $\nu_{r+1}>0$. When we arrive at the thick row $r$, the SSAF has $\nu_{r+1}\ge 1$ and $\nu_{i}=0$, for $i\le r$. Since $\lambda_r=\lambda_{r+1}$,  $\mathcal{G}$ has at least one cross, say $\bf X$, contributing with $r+1$, to the right of the rightmost cross between thick rows $r$ and $r-1$.
 Either a new component is created in the partition, immediately below thick row $r$, or not. In the last case, when we arrive at thick row $r-1$, we have $\nu_r=0$, and   the crosses below  will not sit a box on column $r$ of the SSAF, thus still $\nu_r=0$. In the former case, $\nu_r\ge 1$, and due to the cross $\bf X$ to the right,  the local rules force that, in any stage, we do not have $\nu_{r+1}=\nu_r$ in the SSAF. When we arrive to the thick row $r-1$, one has $\nu_r<\nu_{r+1}$.
The contribution of the remain crosses in  $\mathcal{G}$, below,  will not change the inequality $\nu_r<\nu_{r+1}$, due to $\bf X$.

$(d)$ It follows from $(a)$ and $(b)$ by considering the  biword   $\overline w$   represented in $\mu:=\overline \lambda$.
\end{proof}
\subsection{ The bijection}
Given $\nu\in \mathbb{N}^n$, let  be the SSAF where, for all $i$, the column $i$ has   $\nu_i$ $i$'s. There exists, then, always a SSAF-pair for a given pair of shapes  in the same $\mathfrak{S}_n$-orbit.
 Let $w$ be a biword in lexicographic order on the alphabet $[n]$, and $\Phi(w)=(F,G)$. The pair $(sh(F),sh(G))$ will be called the key-pair of $w$.
  One shows that the biwords whose biletters constitute a multiset of cells in a staircase possibly plus a layer of boxes, as  in  figure $(\star)$,
 are characterized by inequalities in the Bruhat order satisfied by its key-pair. Notice that in figure $(\star)$, $1\le r_{k}<\cdots< r_{p+1}<r_1<\cdots<r_p<n$. We start with the case $k=p$.
\begin{thm} \label{bijection} {\em (NW or SE layer)}
Let $w$ be a biword in lexicographic order on the alphabet $[n]$, with key-pair $(\nu,\beta)$.
Let  $0\le k<n$, and $1\le r_1<\cdots<r_k<n$. The following conditions are equivalent

$(a)$ $w$ consists of a multiset of cells in the staircase of size $n$ plus
the $k$ cells
 ${n-r_1+1\choose r_1+1}$$,\ldots,$

 ${n-r_k+1\choose r_k+1}$, each with multiplicity at least one, as in figure $(\star)$ with $k=p$.

$(b)$  $\beta\leq  \omega s_{r_k}\cdots
s_{r_2}s_{r_1}\nu$, and $\beta\nleq  \omega s_{r_k}\cdots
\widehat s_{r_{i}}\cdots s_{r_1}\nu$, for $1\le i\le k$, where  $\,\,\widehat\empty\,\,$ means omission.

$(c)$
$s_{n-r_1}\cdots s_{n-r_k}\beta\leq  \omega\nu$, and $s_{n-r_1}\cdots \widehat s_{n-r_{i}}\cdots s_{n-r_k}\beta\nleq
\omega \nu$,{ for } $1\le i\le k$.
\end{thm}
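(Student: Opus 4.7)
The plan is to prove (a)~$\Leftrightarrow$~(b) by induction on $k$, and then to derive (b)~$\Leftrightarrow$~(c) by iteratively applying Lemma~\ref{1layer-easy}(d) to transfer simple reflections between the two sides of the Bruhat inequality. The base case $k = 0$ asserts that $w$ fills a multiset of cells in the pure staircase $\rho$ iff $\beta \le \omega\nu$. This is the staircase case underlying Lascoux's identity $F_\rho = \sum_\nu \widehat\kappa_\nu(x)\kappa_{\omega\nu}(y)$ read through $\Phi$: biwords in $\rho$ correspond to SSAF-pairs whose key-pair lies in a common $\mathfrak{S}_n$-orbit satisfying $\beta \le \omega\nu$.

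For the inductive step I would first establish a \emph{plain} equivalence, stripping the multiplicity constraint: writing $\lambda_S$ for the near-staircase with added cells $\{(r_j+1, n-r_j+1) : r_j \in S\}$ and $\pi_S = s_{r_k}\cdots s_{r_1}$ for $S = \{r_1 < \cdots < r_k\}$, the claim is that $w$ fits in $\lambda_S$ iff $\beta \le \omega \pi_S \nu$. In $\lambda_S$ one has $(\lambda_S)_{r_1} = (\lambda_S)_{r_1+1} = n - r_1 + 1 > (\lambda_S)_{r_1+2}$ (regardless of whether $r_2 = r_1 + 1$), so Theorem~\ref{corcor1}(c) applies at $r = r_1$: the biword $\Upsilon_{r_1} w$ fits in $\lambda_{S\setminus\{r_1\}}$, has shape-pair $(s_{r_1}\nu, \beta)$, and $\nu_{r_1} < \nu_{r_1+1}$. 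The inductive hypothesis then gives $\beta \le \omega \pi_{S\setminus\{r_1\}}(s_{r_1}\nu) = \omega\pi_S\nu$. Conversely, starting from this inequality, inductively produce a biword fitting in $\lambda_{S\setminus\{r_1\}}$ with shape $(s_{r_1}\nu, \beta)$ and apply $\overline\Upsilon_{r_1}$ to recover $w$ fitting in $\lambda_S$. Granted this plain equivalence, condition (a) — that each cell in $S$ appears in $w$ with multiplicity $\ge 1$ — is exactly saying that $w$ fits in $\lambda_S$ but in no $\lambda_{S\setminus\{r_i\}}$; translated through the plain equivalence, this yields precisely the ``$\le$'' together with the ``$\nleq$'' inequalities of (b).

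For (b)~$\Leftrightarrow$~(c), using $\omega s_r \omega = s_{n-r}$ I would rewrite (b) as $\beta \le s_{n-r_k}\cdots s_{n-r_1}\omega\nu$ with the corresponding ``$\nleq$'' conditions, and apply Lemma~\ref{1layer-easy}(d) $k$ successive times. At the first step the hypothesis $\beta \nleq s_{n-r_{k-1}}\cdots s_{n-r_1}\omega\nu$ (obtained from the $i = k$ ``$\nleq$'' condition of (b) after conjugation by $\omega$) licenses moving $s_{n-r_k}$ from the right to the left, producing $s_{n-r_k}\beta \le s_{n-r_{k-1}}\cdots s_{n-r_1}\omega\nu$; iterating yields the inequality in (c) after $k$ transfers. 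A parallel tracking of the remaining ``$\nleq$'' conditions — each transported by Lemma~\ref{1layer-easy}(d) — delivers the ``$\nleq$'' conditions of (c).

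The hardest part will be the converse direction of the plain equivalence in the inductive step, where I must verify that $\overline\Upsilon_{r_1}$ indeed reinflates the deleted corner cell correctly; this is handled via Theorem~\ref{corcor1}(a) combined with the chain of strict Bruhat descents $s_{r_k}\cdots s_{r_1}\nu < \cdots < s_{r_1}\nu < \nu$ guaranteed by Lemma~\ref{1layer-easy}(c) from the hypotheses of (b). A secondary difficulty is the bookkeeping in the passage (b)~$\Leftrightarrow$~(c): each application of Lemma~\ref{1layer-easy}(d) requires a specific ``$\nleq$'' hypothesis, and one must verify that these are precisely the ones supplied by (b), respectively (c), after conjugation by $\omega$.
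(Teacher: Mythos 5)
Your overall architecture --- induction on $k$ with the staircase as base case, reduction of the inductive step via Theorem~\ref{corcor1}(c) and $\Upsilon_{r_1}$, and an order-theoretic transfer for (b)$\Leftrightarrow$(c) via Lemma~\ref{1layer-easy}(d) in place of the paper's appeal to the transposed statement Theorem~\ref{corcor1}(d) --- is reasonable and close to the paper's. But the ``plain equivalence'' you insert as the pivot of the inductive step, namely ``$w$ fits in $\lambda_S$ iff $\beta\le\omega\pi_S\nu$'' with $\pi_S=s_{r_k}\cdots s_{r_1}$, is false, and the argument genuinely breaks there. Counterexample: $n=2$, $S=\{1\}$, $\lambda_S=(2,2)$, $w=\binom{2}{1}$. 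Then $\nu=(1,0)$ and $\beta=(0,1)=\omega\nu$, so $w$ fits $\lambda_S$ (it even fits the staircase), yet $\beta\nleq\omega s_1\nu=(1,0)$ because $(1,0)<(0,1)$ in the Bruhat order on the orbit. The root of the problem is that Theorem~\ref{corcor1}(c) requires the cell $(r_1+1,\lambda_{r_1+1})$ to occur in $w$ with multiplicity at least one; when it does not, one only gets $\beta\le\omega\pi_T\nu$ for the subset $T$ of corner cells actually present, and since $\nu_{r_1}>\nu_{r_1+1}$ is possible one has $\omega\pi_S\nu<\omega\pi_{S\setminus\{r_1\}}\nu$, so the inequality for the larger set does not follow. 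The set of biwords fitting $\lambda_S$ is in fact the \emph{union} over $T\subseteq S$ of the classes cut out by the full (b)-conditions for $T$ (the $\le$ together with the $\nleq$ parts), not the single class $\beta\le\omega\pi_S\nu$.

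Consequently both uses you make of the plain equivalence fail: in (a)$\Rightarrow$(b) the main inequality must come from ``fits \emph{and contains the cell}'' (which is fine, via Theorem~\ref{corcor1}(c) and induction), but the one condition the induction does not hand you, $\beta\nleq\omega s_{r_k}\cdots s_{r_2}\nu$, cannot be read off from ``does not fit $\lambda_{S\setminus\{r_1\}}$'' through your equivalence; and in (b)$\Rightarrow$(a) the hypothesis $\beta\nleq\omega\pi_{S\setminus\{r_i\}}\nu$ does not imply that $w$ fails to fit $\lambda_{S\setminus\{r_i\}}$ (same counterexample). The repair is to keep the multiplicity conditions inside the inductive statement throughout: use the bijectivity of $\Phi$ together with the partition of the biwords fitting $\lambda_S$ according to which corner cells they actually contain, and the mutual disjointness of the corresponding (b)-condition classes, which is where Lemma~\ref{1layer-easy} and the strict chain $s_{r_k}\cdots s_{r_1}\nu<\cdots<s_{r_1}\nu<\nu$ enter. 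Your (b)$\Leftrightarrow$(c) step is essentially the paper's Lemma~\ref{equi-NW-SE-mix} and is fine in principle, though note that it too relies on the ``subsequence'' consequence $\beta\nleq\omega s_{i_t}\cdots s_{i_1}\nu$, which the paper extracts from the already-proved theorem rather than from pure order theory.
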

\begin{proof} By induction on $k$. For $k=0$, it is the staircase in \cite{lascouxcrystal}, and in \cite{ejc}. For $k>0$, we use Theorem \ref{corcor1}, (c), to prove $(a) \Leftrightarrow(b)$. The detailed proof can be found in \cite{cim}. Once we have proved this,  $(a) \Leftrightarrow(c)$ follows now from Theorem \ref{corcor1}, (d).\end{proof}
As a consequence of this theorem, if  $\nu$, $\beta\in \mathbb{N}^n$ satisfy the inequalities in $(b)$, then
$\beta\nleq \omega s_{i_t}\cdots s_{i_1}\nu$, for $0\le t<k$, and $ i_1<\dots<i_t$ any subsequence of $ r_1<\cdots<r_k$.
Hence,  Lemma \ref{1layer-easy} 
 implies.
\begin{lem}\label{equi-NW-SE-mix}  
 Let  $0\le p\le k<n$, and $1\le r_1<\cdots<r_k<n$.
The three conditions are equivalent

$(a)$ $\beta\nleq  \omega s_{r_k}\cdots \widehat s_{{i}}\cdots s_{r_1}\nu, \text{ for } i=1,\ldots,k$, and $\beta\leq  \omega s_{r_k}\cdots s_{r_1}\nu.$

$(b)$ $ s_{n-r_1}\cdots \widehat s_{n-r_{i}}\cdots s_{n-r_k}\beta\nleq
\omega \nu, \text{ for } i=1,\ldots,k$, and $s_{n-r_1}\cdots s_{n-r_k}\beta\leq  \omega\nu.$

$(c)$ $ s_{n-r_{k-p+1}}\cdots s_{n-r_{k}}\beta\nleq  \omega s_{r_{k-p}}\cdots
\widehat s_{r_{i}}\cdots s_{r_1}\nu$, { for } $i=1,\ldots, k-p,$
$ s_{n-r_{k-p+1}}\cdots $ $\widehat s_{n-r_{i}}\cdots $ $s_{n-r_{k}}$ $\beta\nleq
\omega s_{r_{k-p}}\cdots s_{r_1}\nu$, { for } $i=k-p+1,\ldots,k,$ and
$ s_{n-r_{k-p+1}}\cdots s_{n-r_{k}}\beta\leq  \omega s_{r_{k-p}}\cdots
s_{r_1}\nu$.
\end{lem}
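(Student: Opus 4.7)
The plan is to obtain $(a)\Leftrightarrow(b)$ as a direct restatement of Theorem \ref{bijection} (its equivalence $(b)\Leftrightarrow(c)$, with $\alpha=\nu$), and then to view $(c)$ as an intermediate condition that interpolates between $(a)$ (at $p=0$) and $(b)$ (at $p=k$). For this interpolation I would induct on $p$, proving $C_p\Leftrightarrow C_{p+1}$, where $C_p$ abbreviates condition $(c)$ at parameter $p$. The inductive step consists of transferring a single simple reflection $s_{n-r_{k-p}}$ from the $\nu$-side to the $\beta$-side, using the conjugation identity $\omega s_{r_j}=s_{n-r_j}\omega$ in $\mathfrak{S}_n$ together with Lemma \ref{1layer-easy}(d).

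Concretely, set $\beta_p:=s_{n-r_{k-p+1}}\cdots s_{n-r_k}\beta$ and $\alpha_{p+1}:=\omega s_{r_{k-p-1}}\cdots s_{r_1}\nu$, so that $\omega s_{r_{k-p}}\cdots s_{r_1}\nu=s_{n-r_{k-p}}\alpha_{p+1}$. The inequality in $C_p$ then reads $\beta_p\le s_{n-r_{k-p}}\alpha_{p+1}$, while the non-inequality of $C_p$ at the first-set index $i=k-p$ reads $\beta_p\nleq \alpha_{p+1}$. By Lemma \ref{1layer-easy}(d), these two together are equivalent to $s_{n-r_{k-p}}\beta_p=\beta_{p+1}\le\alpha_{p+1}$ and $\beta_p\nleq\alpha_{p+1}$, giving the inequality of $C_{p+1}$ and, simultaneously, the non-inequality in the second set of $C_{p+1}$ at the index $i=k-p$.

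To transfer the remaining non-inequalities I would use the contrapositive reading of Lemma \ref{1layer-easy}(d), namely that $\beta\nleq s\alpha$ is equivalent to the conjunction $\beta\nleq\alpha$ and $s\beta\nleq\alpha$. Applied to a first-set non-inequality of $C_p$ with $i<k-p$ (after rewriting its right-hand side as $s_{n-r_{k-p}}$ times $\omega s_{r_{k-p-1}}\cdots\widehat s_{r_i}\cdots s_{r_1}\nu$), this immediately produces the first-set non-inequality of $C_{p+1}$ at the same $i$, as well as the extended non-inequality for $\beta_p$ that the remark following Theorem \ref{bijection} asserts. For a second-set non-inequality of $C_p$ at $i\in\{k-p+1,\dots,k\}$, setting $\gamma=s_{n-r_{k-p+1}}\cdots\widehat s_{n-r_i}\cdots s_{n-r_k}\beta$ and rewriting its right-hand side as $s_{n-r_{k-p}}\alpha_{p+1}$ yields, by the same principle, $s_{n-r_{k-p}}\gamma\nleq\alpha_{p+1}$, which is exactly the desired second-set non-inequality of $C_{p+1}$ at $i$.

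The hard part will be the bookkeeping: ensuring that for every non-inequality required in $C_{p+1}$ one can identify the precise non-inequality of $C_p$ from which it comes, and then inverting the passage for the direction $C_{p+1}\Rightarrow C_p$. The extended non-inequalities guaranteed by the remark after Theorem \ref{bijection} are essential because several intermediate steps invoke non-inequalities against partial subsequences that are not literally listed in $C_p$ but follow from its full content together with Lemma \ref{1layer-easy}(a)–(b). The reverse implication $C_{p+1}\Rightarrow C_p$ then runs the same chain backwards, relying on the biconditional nature of Lemma \ref{1layer-easy}(d).
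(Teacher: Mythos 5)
Your proposal follows the same route the paper itself indicates: the paper derives this lemma in one line from Lemma \ref{1layer-easy} together with the extended non-inequalities noted after Theorem \ref{bijection}, which is exactly the machinery you deploy (conjugation $\omega s_{r}=s_{n-r}\omega$ plus part (d) to move one reflection at a time from the $\nu$-side to the $\beta$-side), so the approach matches. One caution: what you call the ``contrapositive reading'' of Lemma \ref{1layer-easy}(d) --- that $\beta\nleq s\alpha$ is equivalent to the conjunction of $\beta\nleq\alpha$ and $s\beta\nleq\alpha$ --- is not literally the contrapositive of (d) and is false without the ascent hypothesis $s\alpha>\alpha$ (take $\beta=\alpha$ with $s\alpha<\alpha$); in your application the ascent does hold for each right-hand side, but it has to be extracted from Lemma \ref{1layer-easy}(a)--(c), namely that the chains $\alpha>s_{r_1}\alpha>\cdots$ and their omitted-index subchains are strictly decreasing, which is precisely the bookkeeping you defer.
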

Theorem \ref{bijection} can now be written according to   figure $(\star)$, with the generic cutting line.  The condition $r_{p+1}-r_{1}>1$, if $p>0$, allows to use the commutation relation of $\mathfrak{S}_n$.
 \begin{thm} \label{bijection-NW-SE} {\em (NW-SE  layer)}
Let $w$ be a biword in lexicographic order on the alphabet $[n]$, with key-pair $(\nu,\beta)$.
Let  $0\le p\le k<n$,  and $1\le r_{k}<\cdots< r_{p+1}<r_1<\cdots<r_p<n$, where $r_1-r_{p+1}>1$, if $p>0$. The following conditions are equivalent

$(a)$ $w$ consists of a multiset of cells in the staircase of size $n$ and
the $k$ cells
 ${e_k+1\choose r_k+1}$$,\ldots,$${e_{p+1}+1\choose r_{p+1}+1}$, ${n-r_{1}+1\choose r_{1}+1}$$,\ldots,$${n-r_{p}+1\choose r_{p}+1}$ above it, each with multiplicity at least one, as shown in figure $(\star)$.

$(b)$ $s_{e_{k}}\cdots s_{e_{p+1}}\beta\nleq  \omega s_{r_p}\cdots
\widehat s_{r_{i}}\cdots s_{r_1}\nu, \text{ for } i=1,\ldots, p,$
$ s_{e_{k}}\cdots \widehat s_{e_{i}}\cdots s_{e_{p+1}}\beta\nleq
\omega s_{r_p}\cdots s_{r_1}\nu$, { for } $ i=p+1,\ldots,k,$
and $ s_{e_k}\cdots s_{e_{p+1}}\beta\leq  \omega s_{r_p}\cdots
s_{r_1}\nu.$
\end{thm}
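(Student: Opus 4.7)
The plan is to reduce to Theorem \ref{bijection} by merging the $k$ NW and SE cells into a single list indexed in increasing order, and then translate the resulting one-sided Bruhat condition into the mixed form in (b) using the commutation of simple transpositions that the hypothesis $r_1 - r_{p+1} > 1$ affords.

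First I would reindex. Let $R_1 < R_2 < \cdots < R_k$ be the increasing rearrangement of $\{r_1,\ldots,r_k\}$, so that $R_j = r_{k-j+1}$ for $j \in [1,k-p]$ (the SE values) and $R_j = r_{j-(k-p)}$ for $j \in [k-p+1,k]$ (the NW values). The $k$ cells listed in (a) are then precisely ${n-R_j+1 \choose R_j+1}$ for $j \in [1,k]$, so Theorem \ref{bijection} applied to $R_1,\ldots,R_k$ asserts that (a) is equivalent to $\beta \le \omega s_{R_k}\cdots s_{R_1}\nu$ together with $\beta \nleq \omega s_{R_k}\cdots\widehat{s}_{R_i}\cdots s_{R_1}\nu$ for every $i \in [1,k]$.

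Second I would invoke commutation. From $r_1 - r_{p+1} > 1$ one gets $|r_i - r_j| \ge 2$ for every $i \in [1,p]$ and $j \in [p+1,k]$, so the NW transpositions $s_{r_1},\ldots,s_{r_p}$ pairwise commute with the SE transpositions $s_{r_{p+1}},\ldots,s_{r_k}$. Consequently $s_{R_k}\cdots s_{R_1} = (s_{r_{p+1}}s_{r_{p+2}}\cdots s_{r_k})\cdot(s_{r_p}s_{r_{p-1}}\cdots s_{r_1})$, placing the SE block on the left (adjacent to $\omega$) and the NW block on the right (adjacent to $\nu$). Iterating the identity $\omega s_j = s_{n-j}\omega$ together with Lemma \ref{1layer-easy}(d), I would then peel the SE block across to the $\beta$-side: each outer factor $\omega s_{r_{p+j}}$ becomes $s_{e_{p+j}}\omega$, and Lemma \ref{1layer-easy}(d) converts $\beta \le s_{e_{p+j}}\gamma$ into $s_{e_{p+j}}\beta \le \gamma$ as an iff. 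After $k-p$ iterations (starting with $s_{r_{p+1}}$ and ending with $s_{r_k}$) the inequality has become $s_{e_k}\cdots s_{e_{p+1}}\beta \le \omega s_{r_p}\cdots s_{r_1}\nu$, which is the inequality of (b). This is precisely the mechanism behind Lemma \ref{equi-NW-SE-mix}, applied here to the commuted factorization rather than to the monotone one.

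The main obstacle is the bookkeeping of the $k$ tightness non-inequalities. One must check that omitting $s_{R_i}$ on the original $\nu$-side corresponds, after commutation and peel-off, to omitting $s_{e_{k-i+1}}$ on the $\beta$-side when $i \in [1,k-p]$ (an SE factor), and to omitting $s_{r_{i-(k-p)}}$ on the $\nu$-side when $i \in [k-p+1,k]$ (an NW factor). Because both the commutation step and Lemma \ref{1layer-easy}(d) are bidirectional, this correspondence is exact, and the $k$ non-inequalities from Theorem \ref{bijection}(b) split cleanly into the $k-p$ SE-omissions $s_{e_k}\cdots\widehat{s}_{e_i}\cdots s_{e_{p+1}}\beta \nleq \omega s_{r_p}\cdots s_{r_1}\nu$ and the $p$ NW-omissions $s_{e_k}\cdots s_{e_{p+1}}\beta \nleq \omega s_{r_p}\cdots\widehat{s}_{r_i}\cdots s_{r_1}\nu$ displayed in (b), completing the equivalence.
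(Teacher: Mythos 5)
Your proposal is correct and follows essentially the same route as the paper: reduce to Theorem \ref{bijection} via the increasing rearrangement of the $r_i$, use the commutation of the NW and SE blocks afforded by $r_1-r_{p+1}>1$, and transfer the SE factors to the $\beta$-side by iterating Lemma \ref{1layer-easy}$(d)$ --- which is exactly the content of Lemma \ref{equi-NW-SE-mix}$(c)$ that the paper invokes. The only point to keep in mind is that each application of Lemma \ref{1layer-easy}$(d)$ needs the companion condition $\beta\nleq\omega s_{i_t}\cdots s_{i_1}\nu$ for the relevant proper subsequences, which the paper supplies as the stated consequence of Theorem \ref{bijection}.
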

\section{ A  combinatorial Cauchy kernel expansion
over near staircases}
\label{sec:last}
 Let $\lambda$ be the Ferrers shape as in figure $(\star)$. We  are finally equipped for the bijective proof
 of the $F_\lambda$ expansion. Before, we still need some definitions and a technical  lemma.
 Let SSYT$_n$ and SSAF$_n$ denote the set of all SSYTs and SSAFs on the alphabet $[n]$, respectively.
We assume the reader familiar with the definitions and basic properties of Demazure operators, or divided differences, $\pi_i$ and $\hat\pi_i$,  key polynomials, or Demazure characters, $\kappa_\nu$, and Demazure atoms  $\widehat \kappa_\nu$, where $\nu\in\mathbb{N}^n$, and refer the reader to \cite{lascouxdraft} and \cite{reiner}. In particular, recall  the action of Demazure operators $\pi_i$  on the key polynomial $\kappa_\nu$ and
 on  the Demazure atom  $\widehat\kappa_\nu$:
$\pi_i\kappa_\nu= \kappa_{s_i\nu}$, $\mbox{if } \nu_i>\nu_{i+1}$, and else, $ \kappa_\nu$;
and $\pi_{i} \widehat{\kappa}_\nu=
\widehat{\kappa}_{s_{i}\nu}+\widehat{\kappa}_{\nu}$, if $\nu_{i}>\nu_{i+1}$, else, $ \widehat{\kappa}_{\nu}$, if $\nu_{i}=\nu_{i+1}$
and, $0$, if $\nu_{i}<\nu_{i+1}$.
Recall also the combinatorial formulas in terms of SSYTs, \cite{lascouxkeys}, and SSAFs, \cite{masondemazure}, 

\vskip0.3em

$\displaystyle{\hat\kappa_\nu=
    \sum_{\begin{smallmatrix}T\in SSYT_n\\K_+(T)=key(\nu)\end{smallmatrix} }x^T=\sum_{\begin{smallmatrix}F\in SSAF_n\\sh(F)=\nu\end{smallmatrix}}x^F, }$ and $\displaystyle{\kappa_\nu=\sum_{\begin{smallmatrix}T\in SSYT_n\\K_+(T)\le key(\nu)\end{smallmatrix}}x^T=\sum_{\begin{smallmatrix}F\in SSAF_n\\sh(F)\le \nu\end{smallmatrix}}x^F.}$

\vskip0.3em
Given $0\le p\le k< n$, let  $k-p< r_1<\dots <r_{p}<n$ and $p<e_{p+1}<\dots <e_k<n$. For each $(z,t)\in [0, p]\times [0, k-p]$, and each $H_z=\{i_1<\dots<i_z\}\in {[p]\choose z}$, and each $M_t=\{j_1<\dots<j_t\}\in {[p+1 ~ k]\choose t}$,
 define, for $z=t=0$,
$\displaystyle 
\mathcal{A}^{\emptyset,\emptyset}=\left\{\begin{smallmatrix}(F,G)\in SSAF_n^2:\end{smallmatrix} \begin{smallmatrix}
sh(G)\leq  \omega sh(F)\end{smallmatrix}\right\}$, and else

\vskip0.3em

$\displaystyle
\mathcal{A}_{z,t}^{H_z,M_t}=\left\{\begin{smallmatrix}(F,G)\in SSAF_n^2:\end{smallmatrix} \begin{smallmatrix}
 s_{e_{j_t}}\cdots s_{e_{j_{1}}}sh(G)\nleq  \omega s_{r_{i_z}}\cdots \widehat s_{r_{i_m}}\cdots s_{r_{i_{1}}}sh(F),~ m=1,2, \ldots,z\\ \\
s_{e_{j_t}}\cdots \widehat s_{e_{j_l}}\cdots s_{e_{j_{1}}}sh(G)\nleq  \omega s_{r_{i_z}}\cdots s_{r_{i_1}}sh(F),~ l=1,2, \ldots,t\\ \\
 s_{e_{j_t}}\cdots s_{e_{j_{1}}}sh(G)\leq  \omega s_{r_{i_z}}\cdots s_{r_{i_1}}sh(F)\end{smallmatrix}\right\}.
$

\vskip0.3em
\vskip0.2em

\begin{lem}\label{2-separationgeneral} {\em(NW-SE Separation)}
Given $0\le p\le k< n$ and  $k-p< r_1<r_2<\dots <r_{p}<n$ and $p<e_{p+1}<\dots <e_k<n$. For each $(z,t)\in [0, p]\times [0, k-p]$, and each $H_z=\{i_1<i_2<\dots<i_z\}\in {[p]\choose z}$, and each $M_t=\{p+2\le j_1<j_2<\dots<j_t\}\in {[p+2 ~ k]\choose t}$,
 let $M^1_{t+1}:=\{p+1\}\cup M_t$, and
\begin{eqnarray}\nonumber
\mathcal{B}_{z,t}^{H_z,M_t}:=\left\{\begin{smallmatrix}(F,G)\in SSAF_n^2:\end{smallmatrix} \begin{smallmatrix}
sh(G)_{e_{p+1}}<sh(G)_{e_{p+1}+1}\\ \\
 s_{e_{j_t}}\cdots s_{e_{j_{1}}}s_{e_{p+1}}sh(G)\nleq  \omega s_{r_{i_z}}\cdots \widehat s_{r_{i_m}}\cdots s_{r_{i_{1}}}sh(F),~ m=1,2, \ldots,z\\ \\
s_{e_{j_t}}\cdots \widehat s_{e_{j_l}}\cdots s_{e_{j_{1}}}s_{e_{p+1}}sh(G)\nleq  \omega s_{r_{i_z}}\cdots s_{r_{i_1}}sh(F),~ l=1,2, \ldots,t\\ \\
 s_{e_{j_t}}\cdots s_{e_{j_{1}}}s_{e_{p+1}}sh(G)\leq  \omega s_{r_{i_z}}\cdots s_{r_{i_1}}sh(F)\end{smallmatrix}\right\}.
\end{eqnarray}
Then
$\displaystyle\mathcal{B}_{z,t}^{H_z,M_t}=\{(F,G)\in \mathcal{A}_{z,t}^{H_z,M_t}:sh(G)_{e_{p+1}}< sh(G)_{e_{p+1}+1}\}\cup \mathcal{A}_{z,t+1}^{H_z,M^1_{t+1}}.$
\end{lem}

Let $x=(x_1,\ldots,x_n)$ and $y=(y_1,\ldots, y_n)$ be two sequences of indeterminates. If  $\pi_{e_{p+1}}$ is the Demazure operator with respect to $y$,   one has

$
\displaystyle
\label{inductt+++}\sum_{\beta\in\mathbb{N}^n}\pi_{e_{p+1}}\sum_{\begin{smallmatrix}(F,G)\in \mathcal{A}_{z,t}^{H_z,M_t}\\sh(G)=\beta\end{smallmatrix}}x^Fy^G=
\sum_{\begin{smallmatrix}(F,G)\in \mathcal{A}^{H_z,M_t}_{z,t}\\ sh(G)_{e_{p+1}}\ge sh(G)_{e_{p+1}+1}\end{smallmatrix}}x^Fy^G+
\sum_{\begin{smallmatrix}(F,G)\in \mathcal{B}_{z,t}^{H_z,M_t}\end{smallmatrix}}x^Fy^G.\quad (2)
$
\begin{thm}
\label{right+inside+inside} 
 Let $0\le p\le k<n$. Let $k-p< r_1<r_2<\dots <r_{p}<n$ and $p<e_{p+1}<\dots <e_k<n$,  with   $p+1<e_{p+1}$, if $p>0$.  For each $(z,t)\in [0, p]\times [0, k-p]$, let  $(H_z,M_t)\in {[p]\choose z}\times {[p+1,k]\choose t}$.
  Then

$\displaystyle 
F_\lambda=\sum_{(F,G)\in \mathcal{A}^{\emptyset,\emptyset}}x^Fy^G
+ \displaystyle\sum_{z=1}^p \sum_{H_z\in{[p]\choose z}}\sum_{\begin{smallmatrix}(F,G)\in \mathcal{A}_{z,0}^{H_z,\emptyset}\end{smallmatrix}}x^Fy^G
+\sum_{t=1}^{k-p}\sum_{M_t \in{[p+1,k]\choose t}}\sum_{\begin{smallmatrix}(F,G)\in \mathcal{A}_{0,t}^{\emptyset, M_t}\end{smallmatrix}}x^Fy^G$

$+\displaystyle{\sum_{\begin{smallmatrix}(z,t)\in [p]\times [k-p]\end{smallmatrix}}\sum_{(H_z,M_t)}\sum_{\begin{smallmatrix}(F,G)\in \mathcal{A}_{z,t}^{H_z,M_t}\end{smallmatrix}}x^Fy^G
=\sum_{\nu\in \mathbb{N}^n}\pi_{r_1}\dots\pi_{r_p}\widehat\kappa_\nu(x)\pi_{e_{p+1}}\dots\pi_{e_k}\kappa_{\omega\nu}(y)}$

$
=\displaystyle
{\sum_{\nu\in \mathbb{N}^n}\widehat\kappa_\nu(x)\pi_{n-r_p}\cdots\pi_{n-r_1}\pi_{e_{p+1}}\cdots\pi_{e_k}\kappa_{\omega\nu}(y)=\sum_{\nu\in \mathbb{N}^n}\pi_{n-e_{k}}\cdots\pi_{n-e_{p+1}}\pi_{r_1}\cdots\pi_{r_p}\widehat\kappa_\nu(x)\kappa_{\omega\nu}(y).}$
\end{thm}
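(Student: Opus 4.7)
The plan is to proceed by induction on the total number $k$ of cells sitting above the staircase $\rho$, using the pure-staircase case $p=k=0$ as the base. In that base case, the identity reduces to the staircase expansion $F_\rho = \sum_\nu \hat\kappa_\nu(x)\kappa_{\omega\nu}(y)$ of \cite{lascouxcrystal}, and the set $\mathcal{A}^{\emptyset,\emptyset}$ matches the SSAF formula for $\kappa_{\omega\nu}(y)$ through the condition $sh(G)\le \omega\,sh(F)$. On the purely combinatorial side, for the generic near-staircase $\lambda$ in figure $(\star)$, I would decompose $F_\lambda = \sum_w x^{w_x}y^{w_y}$ over all biwords $w$ whose biletters constitute a multiset of cells of $\lambda$, partitioning according to which subset $H_z\subseteq\{r_1,\dots,r_p\}$ of NW labels and which subset $M_t\subseteq\{e_{p+1},\dots,e_k\}$ of SE labels actually appear in $w$ with multiplicity at least one. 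For each fixed $(H_z,M_t)$, Theorem \ref{bijection-NW-SE} identifies these biwords precisely with those whose key-pair $(sh(F),sh(G))=(sh\Phi(w))$ lies in $\mathcal{A}_{z,t}^{H_z,M_t}$, and summing over all admissible $(z,t,H_z,M_t)$ yields the entire left-hand side of the claimed expansion.

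To identify this SSAF-pair sum with $\sum_{\nu}\pi_{r_1}\cdots\pi_{r_p}\hat\kappa_\nu(x)\pi_{e_{p+1}}\cdots\pi_{e_k}\kappa_{\omega\nu}(y)$, I would apply identity (2) iteratively together with the Separation Lemma \ref{2-separationgeneral}. Starting from $F_\rho = \sum_\nu \hat\kappa_\nu(x)\kappa_{\omega\nu}(y)$ rewritten via Mason's SSAF formulas for $\kappa_{\omega\nu}$ and $\hat\kappa_\nu$, each application of a Demazure operator $\pi_{e_{p+1}}$ on the $y$-side produces, by (2), a sum over those pairs in $\mathcal{A}_{z,t}^{H_z,M_t}$ with $sh(G)_{e_{p+1}}\ge sh(G)_{e_{p+1}+1}$ plus a sum over $\mathcal{B}_{z,t}^{H_z,M_t}$. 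The Separation Lemma then rewrites the $\mathcal{B}$-term as the pairs in $\mathcal{A}_{z,t}^{H_z,M_t}$ with the reversed strict inequality $sh(G)_{e_{p+1}}<sh(G)_{e_{p+1}+1}$ together with $\mathcal{A}_{z,t+1}^{H_z,M_{t+1}^1}$; combining the two contributions restores the full set $\mathcal{A}_{z,t}^{H_z,M_t}$ while producing the new term $\mathcal{A}_{z,t+1}^{H_z,M_{t+1}^1}$ required by the induction. Iterating through the remaining $\pi_{e_j}$'s on the $y$-side and then through $\pi_{r_1},\dots,\pi_{r_p}$ on the $x$-side, via the analogous identity for $\hat\kappa_\nu$ coming from $\pi_i\hat\kappa_\nu=\hat\kappa_{s_i\nu}+\hat\kappa_\nu$, generates exactly every term of the combinatorial expansion and nothing else. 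The delicate point is the bookkeeping: one must verify that at each step the $\mathcal{B}$-to-$\mathcal{A}$ splitting is non-overlapping with the pre-existing strata and that the commutativity of the operators $\pi_{r_i}$ and $\pi_{e_j}$ (which act on disjoint variable sets) allows the $x$-side and $y$-side iterations to be performed independently.

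For the two alternative right-hand sides, I would use the algebraic relation $\pi_i\kappa_{\omega\nu}=\kappa_{\omega s_{n-i}\nu}$ when $(\omega\nu)_i>(\omega\nu)_{i+1}$, which follows from the conjugation identity $s_i\omega=\omega s_{n-i}$; together with the equivalences of Bruhat conditions in Theorem \ref{bijection-NW-SE} (b)--(c) and Lemma \ref{equi-NW-SE-mix}, this re-expresses the same set $\mathcal{A}_{z,t}^{H_z,M_t}$ in terms of the operators $s_{n-r_i}$ acting on $sh(G)$ rather than $s_{r_i}$ acting on $sh(F)$, and symmetrically for the $s_{n-e_j}$ acting on $sh(F)$ instead of $s_{e_j}$ acting on $sh(G)$. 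This yields the NW and SE versions with the cutting line moved to the top of the first column or to the end of the bottom row, respectively. The principal obstacle is the inductive bookkeeping described above, i.e., verifying that each Demazure operator correctly adds exactly one stratum of $\mathcal{A}$-sets, which is precisely what the combination of identity (2) and Lemma \ref{2-separationgeneral} was engineered to guarantee.
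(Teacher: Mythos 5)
Your proposal follows essentially the same route as the paper: the combinatorial side is obtained by stratifying the biwords of $\lambda$ according to which NW/SE cells occur and invoking Theorem \ref{bijection-NW-SE}, while the algebraic side is matched by peeling off one Demazure operator at a time via identity $(2)$ and the Separation Lemma \ref{2-separationgeneral}, exactly as in the paper's (double) induction on the number of extra cells. The only cosmetic difference is that you organize the induction on the total $k$ and treat the $x$-side operators explicitly, whereas the paper inducts separately on $p$ and $k-p$ and defers the pure NW and pure SE base cases to \cite{cim}.
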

\begin{proof}   The proof is by double induction on  $p\ge 0$ and $k-p\ge 0$. For $p\ge 0$ and $k-p$$= 0$, and {\em vice-versa}, see
\cite{cim}.
Let $p,\,k-p\ge 1$.
One has,

$
\displaystyle\sum_{\nu\in \mathbb{N}^n}\pi_{r_1}\dots\pi_{r_p}\widehat\kappa_\nu(x)\pi_{e_{p+1}}\dots\pi_{e_{k}}\kappa_{\omega\nu}(y)=
\pi_{e_{p+1}}\left(\displaystyle\sum_{\nu\in \mathbb{N}^n}\pi_{r_1}\dots\pi_{r_p}\widehat\kappa_\nu(x)\pi_{e_{p+2}}\dots\pi_{e_{k}}\kappa_{\omega\nu}(y)\right)
$, by induction,

 $
=\pi_{e_{p+1}}\left(
\displaystyle\sum_{\begin{smallmatrix}0\le z\le p\\0\le t\le k-p-1\end{smallmatrix}}\sum_{(H_z,M_t)}\sum_{\begin{smallmatrix}(F,G)\in \mathcal{A}_{z,t}^{H_z,M_t}\end{smallmatrix}}x^Fy^G\right),\quad\text{$(H_z,M_t)\in  {[p]\choose z}\times{[p+2,k]\choose t}$},
$

$=
\displaystyle
\sum_{\begin{smallmatrix}0\le z\le p\\0\le t\le k-p-1\end{smallmatrix}}\sum_{(H_z,M_t)}\left(\sum_{\beta\in \mathbb{N}^n}\pi_{e_{p+1}}\sum_{\begin{smallmatrix}(F,G)\in \mathcal{A}_{z,t}^{H_z,M_t}\\ sh(G)=\beta\end{smallmatrix}}x^Fy^G\right),\quad\text{using $(2)$},
$

 $\label{uselemma22} =\displaystyle\sum_{\begin{smallmatrix}0\le z\le p\\0\le t\le k-p-1\end{smallmatrix}}\sum_{(H_z,M_t)}\left(
\sum_{\begin{smallmatrix}(F,G)\in \mathcal{A}^{H_z,M_t}_{z,t}\\ sh(G)_{e_{p+1}}\ge sh(G)_{e_{p+1}+1}\end{smallmatrix}}x^Fy^G+
\sum_{\begin{smallmatrix}(F,G)\in \mathcal{B}_{z,t}^{H_z,M_t}\end{smallmatrix}}x^Fy^G\right),\;\text{ from Lemma \ref{2-separationgeneral}},
$

$
=
\displaystyle\sum_{\begin{smallmatrix}0\le z\le p\\0\le t\le k-p-1\end{smallmatrix}}\sum_{(H_z,M_t)}
\left(\sum_{\begin{smallmatrix}(F,G)\in \mathcal{A}_{z,t}^{H_z,M_t}\\ sh(G)_{e_{p+1}}\ge sh(G)_{e_{p+1}+1}\end{smallmatrix}}x^Fy^G+
{}
\displaystyle
\sum_{\begin{smallmatrix}(F,G)\in \mathcal{A}_{z,t}^{H_z,M_t}\\ sh(G)_{e_{p+1}}<sh(G)_{e_{p+1}+1}\end{smallmatrix}}x^Fy^G+
\sum_{\begin{smallmatrix}(F,G)\in \mathcal{A}_{z,t+1}^{H_z,M^1_{t+1}}\end{smallmatrix}}x^Fy^G\right)\\
=\displaystyle\sum_{\begin{smallmatrix}0\le z\le p\\0\le t\le k-p-1\end{smallmatrix}}\sum_{(H_z,M_t)}
\left(\sum_{\begin{smallmatrix}(F,G)\in \mathcal{A}_{z,t}^{H_z,M_t}\end{smallmatrix}}x^Fy^G+
\sum_{\begin{smallmatrix}(F,G)\in \mathcal{A}_{z,t+1}^{H_z,M^1_{t+1}}\end{smallmatrix}}x^Fy^G\right)
$

$
=\displaystyle\sum_{\begin{smallmatrix}0\le z\le p\\0\le t\le k-p-1\end{smallmatrix}}\sum_{(H_z,M_t)}
\left(\sum_{\begin{smallmatrix}(F,G)\in \mathcal{A}_{z,t}^{H_z,M_t}\end{smallmatrix}}x^Fy^G\right)
$
$
+
\displaystyle\sum_{\begin{smallmatrix}0\le z\le p\\0\le t\le k-p-1\end{smallmatrix}}\sum_{(H_z,M_t)}\left(
\sum_{\begin{smallmatrix}(F,G)\in \mathcal{A}_{z,t+1}^{H_z,M^1_{t+1}}\end{smallmatrix}}x^Fy^G\right)
$

$
={\displaystyle{
\sum_{(F,G)\in \mathcal{A^{\emptyset,\emptyset}}}x^Fy^G+\sum_{\begin{smallmatrix}(z,t)\in [0,p]\times [0,k-p]\\ (z,t)\neq (0,0)\end{smallmatrix}}\sum_{(H_z,M_t)}\sum_{\begin{smallmatrix}(F,G)\in \mathcal{A}_{z,t}^{H_z,M_t}\end{smallmatrix}}x^Fy^G}},\;\text{with $(H_z,M_t)\in  {[p]\choose z}\times{[p+1,k]\choose t}$}.
$

 Identifying $x_iy_j$ with the biletter ${j\choose i}$, one has three types of biwords:  inside the staircase, consisting only of the extra biletters in the NW part, or  the extra biletters in the SE part. 
  Using bijection \ref{bijection-NW-SE}, their concatenation gives,

\hspace{-0.7cm}$$F_\lambda=F_\rho\prod_{i=1}^p(1-x_{{r_i}+1}y_{n-r_i+1})^{-1}\prod_{j=p+1}^k(1-x_{{r_j}+1}y_{e_j+1})^{-1}
=\displaystyle\sum_{(F,G)\in \mathcal{A^{\emptyset,\emptyset}}}x^Fy^G+\displaystyle\sum_{\begin{smallmatrix}(z,t)\in [0,p]\times [0,k-p]\\ (z,t)\neq (0,0)\end{smallmatrix}}\sum_{(H_z,M_t)}\sum_{\begin{smallmatrix}(F,G)\in \mathcal{A}_{z,t}^{H_z,M_t}\end{smallmatrix}}x^Fy^G.$$
 \end{proof}
 
 \bibliographystyle{abbrvnat}
\bibliography{sample}
\label{sec:biblio}
\vspace{0.5cm}
oazenhas@mat.uc.pt (Olga Azenhas), CMUC, Department of Mathematics, University of Coimbra, 3001-501 Coimbra, Portugal.\\

 \hspace{-0.6cm}aramee@mat.uc.pt (Aram Emami) CMUC, Department of Mathematics, University of Coimbra, 3001-501 Coimbra, Portugal,  and Department of Mathematics, University of Fasa, Iran.

 \end{document}